\documentclass[preprint]{imsart}

\pdfoutput=1
\RequirePackage[OT1]{fontenc}
\usepackage{amsthm, amsmath, natbib, amssymb, bm, enumitem}
\RequirePackage[colorlinks,citecolor=blue,urlcolor=blue]{hyperref}

\usepackage{mathtools} 
\mathtoolsset{showonlyrefs}

\usepackage{tikz}
\usepackage{pgfplotstable}
\usetikzlibrary{arrows,positioning,plotmarks,external,patterns,angles,
	decorations.pathmorphing,backgrounds,fit,shapes,graphs,calc,spy}
\pgfplotsset{compat=1.14}

\startlocaldefs
\numberwithin{equation}{section}
\theoremstyle{plain}
\newtheorem{thm}{Theorem}[section]
\newtheorem{lem}{Lemma}[section]
\newtheorem{corollary}{Corollary}[section]
\newcommand{\rd}{\mathrm{d}}
\def\citeapos#1{\citeauthor{#1}'s (\citeyear{#1})}
\endlocaldefs

\begin{document}
\begin{frontmatter}
\title{Minimaxity under half-Cauchy type priors}
\runtitle{Minimaxity under half-Cauchy type priors}

\begin{aug}
\author{\fnms{Yuzo} \snm{Maruyama}\thanksref{addr1,t1}\ead[label=e1]{maruyama@port.kobe-u.ac.jp}}
\and
\author{\fnms{Takeru} \snm{Matsuda}\thanksref{addr2,t2}\ead[label=e2]{matsuda@mist.i.u-tokyo.ac.jp}}

\runauthor{Y.~Maruyama and T.~Matsuda}

\address[addr1]{Kobe University \printead{e1} 
}

\address[addr2]{The University of Tokyo \& RIKEN Center for Brain Science \printead{e2}
}


\thankstext{t1}{supported by JSPS KAKENHI Grant Numbers 19K11852 and 22K11933}
\thankstext{t2}{supported by JSPS KAKENHI Grant Numbers 21H05205, 22K17865 and JST Moonshot Grant Number JPMJMS2024}
\end{aug}

\begin{abstract}
This is a follow-up paper of Polson and Scott (2012, Bayesian Analysis),
which claimed that the half-Cauchy prior is a sensible default prior for a scale parameter in hierarchical models. 
For estimation of a normal mean vector under the quadratic loss, they showed that the Bayes estimator with respect to the half-Cauchy prior seems to be minimax through numerical experiments.
In terms of the shrinkage coefficient, the half-Cauchy prior has a U-shape and can be interpreted as a continuous spike and slab prior.
In this paper, we consider a general class of priors with U-shapes and theoretically establish sufficient conditions for the minimaxity of the corresponding (generalized) Bayes estimators.
We also develop an algorithm for posterior sampling and present numerical results.
\end{abstract}

\begin{keyword}[class=MSC]
\kwd[Primary ]{62C20}
\end{keyword}

\begin{keyword}
\kwd{minimaxity}
\kwd{shrinkage}
\kwd{spike and slab prior}
\end{keyword}

\end{frontmatter}

\section{Introduction}
\label{sec:intro}
Consider a normal hierarchical model 
\begin{align}
y\mid\beta&\sim N_p(\beta,I_p), \label{hierarchical.1} \\
\beta\mid\kappa &\sim N_p(0,\kappa^{-1}(1-\kappa) I_p), \label{hierarchical.2} \\
\kappa &\sim \pi (\kappa), \label{hierarchical.3}
\end{align}
where the hyperparameter $\kappa \in (0,1)$ specifies the shrinkage coefficient of the Bayes estimator (posterior mean) of $\beta$:
\begin{align}
	\hat{\beta}(y) = {\rm E} [\beta \mid y] = (1-{\rm E}[\kappa \mid y]) y.
\end{align}
\cite{Polson-Scott-2012} claimed that the hyperprior
\begin{equation}\label{prior.half}
 \pi(\kappa) \propto \kappa^{-1/2}(1-\kappa)^{-1/2},
\end{equation}
is a sensible default choice.
Since it has a U-shape with
\begin{equation}\label{cont.spike.slab}
 \lim_{\kappa\to 0}\pi(\kappa)=\lim_{\kappa\to 1}\pi(\kappa)=\infty,
\end{equation}
it may be regarded as a continuous spike and slab prior.
See \cite{carvalho2010horseshoe} for a related discussion in the context of horseshoe priors.
For the parameterization
\begin{equation}
 \lambda=\sqrt{\frac{1-\kappa}{\kappa}} \in (0,\infty),
\end{equation}
the prior \eqref{prior.half} is expressed as
\begin{equation}
 \pi(\lambda) \propto \frac{1}{1+\lambda^2}I_{(0,\infty)}(\lambda),
\end{equation}
which is the reason why the prior \eqref{prior.half} is called the half-Cauchy prior.

\cite{Polson-Scott-2012} introduced a class of ``hypergeometric inverted-beta priors'' with the density
\begin{equation}\label{prior.hypergeometric}
 \pi(\kappa) \propto \kappa^{a-1}(1-\kappa)^{b-1}(1+c\kappa)^{-1}\exp(d\kappa),
\end{equation}
which is a generalization of the half-Cauchy prior \eqref{prior.half}.
For estimation of $p$-variate normal mean $\beta$ under the quadratic loss $\|\hat{\beta}-\beta\|^2$, they derived expressions for the risk of Bayes estimators with respect to the prior \eqref{prior.hypergeometric}. 
Recall that the usual estimator $\hat{\beta}=y$ is inadmissible for $p\geq 3$ although it is minimax for any $p$ \citep{Stein-1974, DSW-2018}.
Through numerical experiments, \cite{Polson-Scott-2012} discussed the minimaxity of the Bayes estimators under \eqref{prior.hypergeometric} for $p\geq 3$ and compared them with the James--Stein estimator.

In this paper, we consider more general (possibly improper) prior 
\begin{equation}\label{our.prior}
 \pi(\kappa)= \kappa^{a-1}(1-\kappa)^{b-1}h(\kappa),
\end{equation}
where we assume
\begin{enumerate}[label= \textbf{A.\arabic*}, leftmargin=*]
\item\label{A.1} [around $\kappa=0$] \ $a<1$ and $h(\kappa)$ is slowly varying at $\kappa=0$ with
\begin{equation}\label{hh}
 \lim_{\kappa\to 0}\kappa\frac{h'(\kappa)}{h(\kappa)}=0,
\end{equation}
\item\label{A.2} [around $\kappa=1$] \ $0<b<1$ and $h(1)<\infty$.
\end{enumerate}
We derive sufficient conditions for the minimaxity of the (generalized) Bayes estimator with respect to the prior \eqref{our.prior} and give several examples.
By \ref{A.1} and \ref{A.2}, the prior \eqref{our.prior} has a U-shape with \eqref{cont.spike.slab} and 
is hence regarded as a continuous spike and slab prior.
More properties of the prior and the corresponding marginal density are given in Section \ref{sec:prior}.
%
%
\cite{Fourdrinier-etal-1998}
showed the minimaxity of (generalized) Bayes estimators 
under priors similar to \eqref{our.prior}, but with $b\geq 1$ in \ref{A.2}. 
Furthermore, the first author of this paper, \cite{Maruyama-1998}, established the minimaxity for the case $h(\kappa)\equiv 1$, 
\begin{equation}
 \pi(\kappa)= \kappa^{a-1}(1-\kappa)^{b-1}
\end{equation}
with some $b\in(0,1)$.
Hence this paper can be also viewed as an extension of two papers 
\cite{Fourdrinier-etal-1998} and \cite{Maruyama-1998}.

The organization of this paper is as follows. 
In Section \ref{sec:prior}, we investigate the properties of the prior \eqref{our.prior} and the corresponding marginal density.
In Section \ref{sec.minimaxity}, we derive sufficient conditions for the minimaxity of the (generalized) Bayes estimator with respect to the prior \eqref{our.prior} with \ref{A.1} and \ref{A.2}.
In Section \ref{sec.example}, we give several examples of the prior \eqref{our.prior} for which the (generalized) Bayes estimator is minimax.
While the half-Cauchy prior itself is not included in these examples, we provide a variant of the half-Cauchy prior that gives a minimax Bayes estimator.
In Section \ref{sec:simulation}, we develop an algorithm for sampling from the posterior distribution under the prior \eqref{our.prior} and present some numerical results.


\section{Properties of the prior and the marginal density}
\label{sec:prior}
For the hierarchical model \eqref{hierarchical.1}, \eqref{hierarchical.2} and \eqref{hierarchical.3}, the prior on $\beta$ is given by
\begin{equation}\label{our.prior.beta}
 \pi(\beta)=\int_0^1 \frac{1}{(2\pi)^{p/2}}\left(\frac{\kappa}{1-\kappa}\right)^{p/2}\exp\left(-\frac{\kappa}{1-\kappa}\frac{\|\beta\|^2}{2}\right)\pi(\kappa)\rd \kappa,
\end{equation}
and the marginal density of $y$ is given by
\begin{equation}
  m(y)=\int \frac{1}{(2\pi)^{p/2}}\exp\left(-\frac{\|y-\beta\|^2}{2}\right)\pi(\beta)\rd \beta. 
\end{equation}
The following result summarizes the properties of $\pi(\kappa)$ in \eqref{our.prior}
satisfying \ref{A.1} and \ref{A.2} and $m(y)$.
\begin{lem}\label{lem.prop.prior}
\begin{enumerate}
\item \label{lem.prop.prior.3}The prior $\pi(\kappa)$ in \eqref{our.prior} is proper if either $a>0$ or 
\{$a=0$ and $\int_0^1 \kappa^{-1}h(\kappa)\rd \kappa <\infty$\}.
\item \label{lem.prop.prior.5}The marginal density of $y$ with respect to \eqref{our.prior} is 
\begin{equation}\label{my.1}
 m(y)=
\frac{1}{(2\pi)^{p/2}}
\int_0^1\exp\left(-\kappa\frac{\|y\|^2}{2}\right)\kappa^{p/2+a-1}(1-\kappa)^{b-1}h(\kappa)\rd\kappa,
\end{equation}
which is finite for every $y\in\mathbb{R}^p$ if either $a>-p/2 $ or 
\{$a=-p/2$ and $\int_0^1 \kappa^{-1}h(\kappa)\rd \kappa <\infty$\}.
\end{enumerate} 
\end{lem}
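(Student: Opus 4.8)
The plan is to reduce both statements to a single integrability fact for functions of the form $\kappa^{c-1}(1-\kappa)^{b-1}h(\kappa)$ on $(0,1)$, and then to exploit the standard growth estimates for slowly varying functions. Concretely, I would first establish the auxiliary claim that, for $b\in(0,1)$ and $h$ slowly varying at $0$ with $h(1)<\infty$,
\[
 \int_0^1 \kappa^{c-1}(1-\kappa)^{b-1}h(\kappa)\,\rd\kappa<\infty
 \quad\Longleftrightarrow\quad
 c>0 \ \text{ or }\ \Bigl(c=0 \ \text{ and }\ \int_0^1 \kappa^{-1}h(\kappa)\,\rd\kappa<\infty\Bigr),
\]
the integral being $+\infty$ when $c<0$. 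Part \ref{lem.prop.prior.3} is then exactly this claim with $c=a$ (propriety of $\pi(\kappa)$ means $\int_0^1 \pi(\kappa)\,\rd\kappa<\infty$), and Part \ref{lem.prop.prior.5}, once \eqref{my.1} has been derived, will follow from this claim with $c=p/2+a$.

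To prove the auxiliary claim, note that near $\kappa=1$ the factor $(1-\kappa)^{b-1}$ is integrable since $b>0$, while $\kappa^{c-1}h(\kappa)$ is bounded there because $h(1)<\infty$; hence only the behavior near $\kappa=0$ is at issue, where $(1-\kappa)^{b-1}\to 1$. For $c>0$, pick $\delta\in(0,c)$ and use $\kappa^{\delta}h(\kappa)\to 0$ to bound the integrand by a constant times $\kappa^{c-1-\delta}$ with $c-1-\delta>-1$; for $c<0$, pick $\delta\in(0,-c)$ and use $\kappa^{-\delta}h(\kappa)\to\infty$ to bound it below by $\kappa^{c-1+\delta}$ with $c-1+\delta<-1$; and for $c=0$ the integrand near $0$ equals $\kappa^{-1}h(\kappa)$ times a bounded factor, so integrability is precisely the stated condition. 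These monotone growth/decay estimates are the only place where slow variation of $h$ is used; I would either cite a standard reference on regularly varying functions, or derive them from \eqref{hh} via the representation $h(\kappa)=h(\kappa_0)\exp\bigl(-\int_\kappa^{\kappa_0}\varepsilon(t)t^{-1}\,\rd t\bigr)$ with $\varepsilon(t)=t\,h'(t)/h(t)\to 0$. This step, though routine, is the main technical point of the argument.

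It remains to derive \eqref{my.1}. Substituting \eqref{our.prior.beta} into the definition of $m(y)$ and interchanging the two integrations by Tonelli's theorem (all integrands are nonnegative), the inner integral over $\beta$ is the convolution of the $N_p(\beta,I_p)$ density with the $N_p(0,\kappa^{-1}(1-\kappa)I_p)$ density, i.e.\ the $N_p\bigl(0,\kappa^{-1}I_p\bigr)$ density (using $1+\kappa^{-1}(1-\kappa)=\kappa^{-1}$), which equals $(2\pi)^{-p/2}\kappa^{p/2}\exp(-\kappa\|y\|^2/2)$. Multiplying by $\pi(\kappa)=\kappa^{a-1}(1-\kappa)^{b-1}h(\kappa)$ and integrating in $\kappa$ yields \eqref{my.1}. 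Finally, since $\exp(-\|y\|^2/2)\le\exp(-\kappa\|y\|^2/2)\le 1$ for $\kappa\in(0,1)$, the integral in \eqref{my.1} is finite if and only if $\int_0^1 \kappa^{p/2+a-1}(1-\kappa)^{b-1}h(\kappa)\,\rd\kappa<\infty$, which by the auxiliary claim with $c=p/2+a$ is exactly the stated condition: $a>-p/2$, or $a=-p/2$ together with $\int_0^1\kappa^{-1}h(\kappa)\,\rd\kappa<\infty$.
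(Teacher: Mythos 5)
Your proposal is correct and follows essentially the same route as the paper: the same Gaussian marginalization (your convolution identity is the paper's completing-the-square computation), the same bound $\exp(-\kappa\|y\|^2/2)\le 1$, and the same reduction of both parts to integrability of $\kappa^{c-1}(1-\kappa)^{b-1}h(\kappa)$ near the endpoints. The only difference is that you spell out the slowly-varying estimates ($\kappa^{\delta}h(\kappa)\to 0$, the Karamata-type representation) and the converse direction, which the paper leaves implicit and does not need.
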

\begin{proof}
{} 
[Part \ref{lem.prop.prior.3}]
By \ref{A.2}, $\pi(\kappa)$ is integrable around $\kappa=1$, since 
\begin{equation}
 \lim_{\kappa\to 1}\frac{\pi(\kappa)}{h(1)(1-\kappa)^{b-1}}=1.
\end{equation}
By \ref{A.1},  $\pi(\kappa)$ satisfies
\begin{equation}
 \lim_{\kappa\to 0}\frac{\pi(\kappa)}{\kappa^{a-1}h(\kappa)}=1,
\end{equation}
and $\kappa^{a-1}h(\kappa)$ is integrable around $\kappa=0$ if either $a>0$ or 
\{$a=0$ and $\int_0^1 \kappa^{-1}h(\kappa)\rd \kappa <\infty$\}.


[Part \ref{lem.prop.prior.5}]
By the identity
\begin{equation}
 \|y-\beta\|^2+\frac{\kappa}{1-\kappa}\|\beta\|^2
=\frac{1}{1-\kappa}\left\|\beta-(1-\kappa)y\right\|^2
+\kappa\|y\|^2,
\end{equation}
the marginal density is
\begin{align}
 m(y)&=\int \frac{1}{(2\pi)^{p/2}}\exp\left(-\frac{\|y-\beta\|^2}{2}\right)\pi(\beta)\rd \beta \\
&=\iint \frac{1}{(2\pi)^{p/2}}\exp\left(-\frac{\|y-\beta\|^2}{2}\right)
\frac{1}{(2\pi)^{p/2}}\left(\frac{\kappa}{1-\kappa}\right)^{p/2}\notag\\
&\quad\times\exp\left(-\frac{\kappa}{1-\kappa}\frac{\|\beta\|^2}{2}\right)
\pi(\kappa)
\rd \beta\rd\kappa \notag\\
&=\frac{1}{(2\pi)^{p/2}}\int \exp\left(-\kappa\frac{\|y\|^2}{2}\right)
\kappa^{p/2}\pi(\kappa)\rd\kappa \notag\\
&=\frac{1}{(2\pi)^{p/2}}
\int_0^1\exp\left(-\kappa\frac{\|y\|^2}{2}\right)\kappa^{p/2+a-1}(1-\kappa)^{b-1}h(\kappa)\rd\kappa.\notag
\end{align}
From  $\exp(-\kappa\|y\|^2/2)\leq 1$, 
\begin{equation}
 m(y)\leq \frac{1}{(2\pi)^{p/2}}
\int_0^1\kappa^{p/2+a-1}(1-\kappa)^{b-1}h(\kappa)\rd\kappa.\label{marginal.0}
\end{equation}
The right hand side of \eqref{marginal.0} is finite (integrable)
if either $a>-p/2 $ or 
\{$a=-p/2$ and $\int_0^1 \kappa^{-1}h(\kappa)\rd \kappa <\infty$\}.
\end{proof}
By 
Tweedie's formula \citep{Efron-2011, Efron-2023-jjsd}, the Bayes estimator can be expressed as
\begin{equation}\label{hat.beta}
 \hat{\beta}
=y+\nabla \log m(y) 
=\left(1-\frac {\int_0^1\kappa^{p/2+1}\exp(-\kappa\|y\|^2/2)\pi(\kappa)\rd\kappa}
{\int_0^1\kappa^{p/2}\exp(-\kappa\|y\|^2/2)\pi(\kappa)\rd\kappa}
\right)y.
\end{equation}
By Lemma \ref{lem.prop.prior}, the marginal density can be finite even when $\pi(\kappa)$ is improper.
In such case, the generalized Bayes estimator is still given by \eqref{hat.beta}.
In the next section, we investigate the minimaxity of both proper Bayes and generalized Bayes estimators.

\section{Minimaxity}
\label{sec.minimaxity}
Let 
\begin{equation}\label{capital.H}
H(\kappa)=\kappa\frac{h'(\kappa)}{h(\kappa)}, \quad 
H_1(\kappa)=\inf_{t\leq \kappa}H(t) \ \text{ and } \ H_2(\kappa)=H(\kappa)-H_1(\kappa).
\end{equation}
The definition of $H_1(\kappa)$ above and \eqref{hh} in \ref{A.1}
imply that $H_1(\kappa)$ is non-positive and monotone non-increasing.
Also, $H_2(\kappa)$ is non-negative and $H_1(0)= H_2(0)=0$ by \eqref{hh} and \eqref{capital.H}.
Then, we have the following result.
\begin{thm}\label{thm.minimax}
Suppose $-p/2\leq a<p/2-2$. The (generalized) Bayes estimator with respect to the prior \eqref{our.prior} is minimax if
\begin{equation}\label{thm.minimax.eq.1}
\frac{3p}{2}+a-\frac{p+2a+2+2\max_{\kappa\in[0,1]} H_2(\kappa)}{b}
+\min\left\{0,p/2+a+2+H_1(1)\right\}\geq 0.
\end{equation}
\end{thm}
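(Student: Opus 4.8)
The plan is to deduce minimaxity from Stein's unbiased estimate of risk. By \eqref{hat.beta}, $\hat\beta = y + \nabla\log m(y)$, so Stein's identity gives
\[
R(\hat\beta,\beta) = p + \mathrm{E}_\beta\!\left[\|\nabla\log m(y)\|^2 + 2\,\Delta\log m(y)\right] = p + \mathrm{E}_\beta\!\left[\,4\,\Delta\sqrt{m(y)}\big/\sqrt{m(y)}\,\right],
\]
and hence it suffices to show that $\sqrt m$ is superharmonic, i.e.\ $\Delta\sqrt{m(y)}\le 0$ for all $y$. Since $m$ is radial, write $m(y) = (2\pi)^{-p/2}M(\|y\|^2)$ with $M(t) = \int_0^1 e^{-\kappa t/2}\kappa^{p/2+a-1}(1-\kappa)^{b-1}h(\kappa)\,\rd\kappa$ from \eqref{my.1}; because the Laplacian of a radial $F(\|y\|^2)$ equals $4\|y\|^2 F'' + 2p F'$, superharmonicity of $\sqrt m$ is equivalent to
\[
2tM''(t) + pM'(t) - t\,\frac{M'(t)^2}{M(t)} \le 0, \qquad t \ge 0.
\]
Introduce $v(t) := -2M'(t)/M(t)$, the posterior mean of $\kappa$ (so that $\hat\beta = (1-v(\|y\|^2))y$); since $v(t) = \mathrm{E}_{\mu_t}[\kappa]$ for the exponentially tilted law $\mu_t(\rd\kappa)\propto e^{-\kappa t/2}\kappa^{p/2+a-1}(1-\kappa)^{b-1}h(\kappa)\,\rd\kappa$, it is positive with $v'(t) = -\tfrac12\mathrm{Var}_{\mu_t}(\kappa)<0$. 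Using $M''/M = v^2/4 - v'/2$ and setting $r(t) := t\,v(t)$, the displayed inequality rearranges to
\[
4t\,r'(t) \ \ge\ r(t)^2 - 2(p-2)\,r(t), \qquad t \ge 0,
\]
which is the inequality to be established.

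Everything now hinges on the behaviour of $r(t)=t\,v(t)$. One has $r(0^+)=0$, and a Laplace-type (Watson) expansion of $M$ and $M'$ at $t=\infty$, using that $h$ is slowly varying at $0$, yields $r(t)\to 2(p/2+a)=p+2a$; since $a<p/2-2$, this limit is strictly below $2(p-2)$, so $r^2-2(p-2)r$ is negative for large $t$ and the inequality becomes a quantitative bound on how far $r$ may overshoot and how negative $r'$ may get in between. Both are obtained by integrating by parts in $\kappa$: the factor $(1-\kappa)^{b-1}$ is written as $-b^{-1}\frac{\rd}{\rd\kappa}(1-\kappa)^b$ — which produces the $1/b$ appearing in \eqref{thm.minimax.eq.1} and, crucially, makes the boundary term at $\kappa=1$ vanish despite $b<1$ — and differentiating the remaining factor $e^{-\kappa t/2}\kappa^{p/2+a}h(\kappa)$ brings in $h'(\kappa)/h(\kappa)=H(\kappa)/\kappa$. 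Here one uses the decomposition $H=H_1+H_2$ of \eqref{capital.H}: $H_1$ is non-positive and non-increasing, so $H_1(1)\le H_1(\kappa)\le 0$, while $0\le H_2(\kappa)\le\max_{\kappa\in[0,1]}H_2(\kappa)$. Feeding these two-sided bounds through the integration-by-parts identities produces an upper bound for $r(t)$ — from which the terms $p+2a+2=2(p/2+a+1)$, $2\max_{\kappa\in[0,1]}H_2(\kappa)$ and the correction $\min\{0,\,p/2+a+2+H_1(1)\}$ emerge — and a matching lower bound for $r'(t)$, and \eqref{thm.minimax.eq.1} is precisely the algebraic condition under which these combine to give $4t\,r'(t)\ge r(t)^2-2(p-2)r(t)$ for every $t\ge 0$. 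The improper cases allowed in the hypothesis ($a=-p/2$; or $a=0$ with $\int_0^1\kappa^{-1}h(\kappa)\,\rd\kappa<\infty$) are treated as limits, Lemma \ref{lem.prop.prior} ensuring $m(y)<\infty$ throughout.

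The main obstacle is this boundary bookkeeping together with the sharpness it demands: the weight $\kappa^{p/2+a-1}(1-\kappa)^{b-1}h(\kappa)$ is singular at $\kappa=1$ (because $0<b<1$) and, for an improper prior, misbehaves at $\kappa=0$ (where $h$ is slowly varying and possibly unbounded), so the direction of each integration by parts must be chosen so that all boundary contributions vanish, and the resulting estimates must be kept tight enough to recover \eqref{thm.minimax.eq.1} rather than a cruder sufficient condition; the splitting $H=H_1+H_2$ is exactly the device that makes this work, since it isolates the monotone part of $\log h$ whose contribution to $r$ and $r'$ has a fixed sign. As a consistency check, when $h\equiv 1$ one has $H_1\equiv H_2\equiv 0$ and \eqref{thm.minimax.eq.1} collapses to $b\ge (p+2a+2)/(3p/2+a)$, recovering the condition of \cite{Maruyama-1998}; its right-hand side equals $1$ exactly at $a=p/2-2$, which is why the strict inequality $a<p/2-2$ is imposed.
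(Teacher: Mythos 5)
Your setup is correct and is in fact equivalent to the paper's: superharmonicity of $\sqrt{m}$ is exactly the condition \eqref{suffi.cond.1} obtained from Stein's identity, your differential inequality $4t\,r'(t)\ge r(t)^2-2(p-2)r(t)$ with $r(t)=t\,v(t)=\phi(t)$ is an algebraically faithful restatement of it, and your identification of the limit $r(t)\to p+2a$, the role of the $H=H_1+H_2$ split, and the origin of the $1/b$ factor are all on target (as is the consistency check that the bound degenerates at $a=p/2-2$). However, the proof has a genuine gap at its center: the sentence claiming that ``feeding these two-sided bounds through the integration-by-parts identities produces an upper bound for $r(t)$ \dots and a matching lower bound for $r'(t)$, and \eqref{thm.minimax.eq.1} is precisely the algebraic condition under which these combine'' is an assertion of the entire quantitative content of the theorem, not a derivation of it. In particular, bounding $r(t)$ from above and $r'(t)$ from below \emph{separately} and then substituting into $4tr'\ge r^2-2(p-2)r$ cannot work as stated: since $r(t)$ converges, any $t$-uniform lower bound on $r'$ is at best of order $-C/t$ with $C$ tied to the same posterior moments that control $r$, so the two bounds must be coupled. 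The device that accomplishes this coupling in the paper is the correlation (Chebyshev) inequality, Lemma~\ref{lem:cor}, applied several times to ratios such as
\begin{equation}
\frac{\int_{0}^{1}\kappa^2(1-\kappa)^{-1}\{\exp(w\{1-\kappa\})-1\}G(\kappa)\,\rd\kappa}{\int_{0}^{1}\kappa\{\exp(w\{1-\kappa\})-1\}G(\kappa)\,\rd\kappa}
\le
\frac{\int_{0}^{1}\kappa^2 G(\kappa)\,\rd\kappa}{\int_{0}^{1}\kappa(1-\kappa)G(\kappa)\,\rd\kappa},
\end{equation}
which is what removes the dependence on $w=\|y\|^2/2$ and yields bounds that are uniform in $y$. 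Your proposal never invokes this (or any substitute), so the passage from the integration-by-parts identities to $t$-free constants is unsupported.

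Concretely, the paper decomposes the bracket as $\Delta(w)=\tfrac{p}{2}-a-2+(1-b)\Delta_1(w)+\Delta_2(w)$ after integrating by parts against $\frac{\rd}{\rd\kappa}\{-\exp(w\{1-\kappa\})+1\}$ (note: it is the exponential factor that is integrated in the main step, with the boundary term vanishing because the antiderivative is chosen to vanish at $\kappa=1$; the $(1-\kappa)^{b-1}\mapsto(1-\kappa)^b$ integration you describe appears only in the auxiliary $w$-free estimate that produces the $1/b$), then bounds $\Delta_1(w)\ge -2\{p/2+a+1+\max_\kappa H_2\}/b$ and $\Delta_2(w)\ge\min\{0,p/2+a+2+H_1(1)\}-2\max_\kappa H_2$, each via Lemma~\ref{lem:cor} together with the monotonicity of $H_1$ and the sign of $H_2$. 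None of these intermediate estimates, nor the correlation inequality that drives them, appears in your argument, so while your plan points in the right direction it does not yet constitute a proof of \eqref{thm.minimax.eq.1}.
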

In the proof, we utilize the following ``correlation inequality'' several times.
\begin{lem}\label{lem:cor}
 Suppose $f(x)$ and $g(x)$ are both monotone non-decreasing in $x$. 
Let $X$ be a continuous random variable. Then,
\begin{equation}
 E[f(X)g(X)]\geq E[f(X)]E[g(X)].
\end{equation}
\end{lem}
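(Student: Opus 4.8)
The plan is to use the classical ``two independent copies'' device. First I would let $Y$ be an independent copy of $X$, defined on a suitable product space, so that $X$ and $Y$ are i.i.d. The key pointwise observation is that, since both $f$ and $g$ are monotone non-decreasing, the two factors $f(X)-f(Y)$ and $g(X)-g(Y)$ always have the same sign: on the event $\{X\geq Y\}$ both are non-negative, and on $\{X\leq Y\}$ both are non-positive. Hence
\begin{equation}
\bigl(f(X)-f(Y)\bigr)\bigl(g(X)-g(Y)\bigr)\geq 0
\end{equation}
almost surely.

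Next I would take expectations of this inequality and expand the product:
\begin{equation}
E\bigl[f(X)g(X)\bigr]-E\bigl[f(X)g(Y)\bigr]-E\bigl[f(Y)g(X)\bigr]+E\bigl[f(Y)g(Y)\bigr]\geq 0.
\end{equation}
Because $X$ and $Y$ are independent and identically distributed, $E[f(X)g(Y)]=E[f(Y)g(X)]=E[f(X)]E[g(X)]$ and $E[f(X)g(X)]=E[f(Y)g(Y)]$. Substituting these identities yields $2E[f(X)g(X)]-2E[f(X)]E[g(X)]\geq 0$, which is the claimed inequality after dividing by $2$.

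The only point that needs a little care is integrability: the manipulation above presupposes that $E[f(X)]$, $E[g(X)]$ and $E[f(X)g(X)]$ are all finite (or at least well defined), which I would state as a standing assumption — in every application within the proof of Theorem \ref{thm.minimax}, the relevant $f$ and $g$ are bounded on the range of $X$, so this is automatic. The continuity of $X$ plays no essential role; it is only invoked to avoid fussing over ties, and the argument goes through verbatim for any random variable with the requisite moments. I do not foresee any genuine obstacle: this is a standard Chebyshev/FKG-type correlation inequality, and the entire proof is the short computation sketched above.
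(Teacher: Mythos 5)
Your proof is correct: the two-independent-copies argument is the standard proof of this Chebyshev/FKG-type correlation inequality, and the paper itself states Lemma \ref{lem:cor} without proof, so there is no alternative argument to compare against. (One small caveat on your closing remark: in the applications inside the proof of Theorem \ref{thm.minimax} some of the monotone functions involved, e.g.\ $\kappa\mapsto\kappa/(1-\kappa)$, are \emph{not} bounded on the range of $X$, but the relevant expectations are nonetheless finite because of the $(1-\kappa)^{b}$ factors with $b>0$, so the integrability hypothesis you flag is indeed satisfied.)
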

\begin{proof}[Proof of Theorem \ref{thm.minimax}]
Since the marginal density $m(y)$ given by \eqref{my.1} is spherically symmetric, 
let $m_*(\|y\|^2) \coloneqq m(y)$.
Then, the quadratic risk of the (generalized) Bayes estimator \eqref{hat.beta}
is
\begin{equation}
 \begin{split}
&  E\left[\|\hat{\beta}-\beta\|^2\right] =E\left[\left\|Y-2\frac{m'_*(\|Y\|^2)}{m_*(\|Y\|^2)}Y-\beta\right\|^2\right]\\
&=E\left[\|Y-\beta\|^2+4\left(\frac{m'_*(\|Y\|^2)}{m_*(\|Y\|^2)}\right)^2\|Y\|^2
-4\sum_{i=1}^p(Y_i-\beta_i)Y_i\frac{m'_*(\|Y\|^2)}{m_*(\|Y\|^2)}\right]\\
&=p+4E\left[\frac{m'_* (\|Y\|^2)}{m_* (\|Y\|^2)}
\Bigl(p-2\|Y\|^2\frac{m''_*(\|Y\|^2)}{-m'_*(\|Y\|^2)}
+\|Y\|^2\frac{-m'_*(\|Y\|^2)}{m_*(\|Y\|^2)}\Bigr)\right],
 \end{split}
\end{equation}
where the third equality follows from \cite{Stein-1974} identity.
Note $m'_*(\|y\|^2)\leq 0$. 
Thus, a sufficient condition for minimaxity 
is 
\begin{equation}\label{suffi.cond.1}
 \begin{split}
 &p-\| y \|^{2}
\frac{ \int_{0}^{1}\kappa^{p/2+a+1}(1-\kappa)^{b-1}h(\kappa)\exp(-\kappa\| y\|^{2}/2) \rd \kappa}
{ \int_{0}^{1}\kappa^{p/2+a}(1-\kappa)^{b-1}h(\kappa)\exp(-\kappa\| y\|^{2}/2) \rd \kappa}\\
&\qquad 
+\frac{\| y \|^{2}}{2}\frac{ \int_{0}^{1}\kappa^{p/2+a}(1-\kappa)^{b-1}h(\kappa)\exp(-\kappa\| y\|^{2}/2) \rd \kappa}
{\int_{0}^{1}\kappa^{p/2+a-1}(1-\kappa)^{b-1}h(\kappa)\exp(-\kappa\| y\|^{2}/2) \rd \kappa}
\geq 0. 
\end{split}
\end{equation}
For $w=\|y\|^2/2$, the sufficient condition \eqref{suffi.cond.1} is equivalent to
\begin{equation}
 \Delta(w)\geq 0,
\end{equation}
where
\begin{equation}
 \begin{split}
\Delta(w) &=p-2w
\frac{ \int_{0}^{1}\kappa^{p/2+a+1}(1-\kappa)^{b-1}h(\kappa)\exp(w\{1-\kappa\}) \rd \kappa}
{ \int_{0}^{1}\kappa^{p/2+a}(1-\kappa)^{b-1}h(\kappa)\exp(w\{1-\kappa\} ) \rd \kappa}\\
&\quad
+w\frac{ \int_{0}^{1}\kappa^{p/2+a}(1-\kappa)^{b-1}h(\kappa)\exp(w\{1-\kappa\}) \rd \kappa}
{\int_{0}^{1}\kappa^{p/2+a-1}(1-\kappa)^{b-1}h(\kappa)\exp(w\{1-\kappa\}) \rd \kappa}.
\end{split}
\end{equation}
Note
\begin{equation}\label{key.derivative}
\frac{\rd }{\rd \kappa}\left\{-\exp(w\{1-\kappa\})+1 \right\}=w \exp(w\{1-\kappa\})
\end{equation}
and
\begin{equation}
 \left[\kappa^{p/2+a+1}(1-\kappa)^{b-1}h(\kappa)\left\{-\exp(w\{1-\kappa\})+1 \right\}\right]_0^1=0.
\end{equation}
Then we have
\begin{equation}\label{int.part.1}
 \begin{split}
 & w \int_{0}^{1}\kappa^{p/2+a+1}(1-\kappa)^{b-1}h(\kappa)\exp(w\{1-\kappa\}) \rd \kappa\\
&=(p/2+a+1)\int_{0}^{1}\kappa^{p/2+a}(1-\kappa)^{b-1}h(\kappa)\left\{\exp(w\{1-\kappa\})-1\right\} \rd \kappa \\
&\quad +\int_{0}^{1}\kappa^{p/2+a+1}(1-\kappa)^{b-1}h'(\kappa)\left\{\exp(w\{1-\kappa\})-1\right\} \rd \kappa \\
&\quad -(b-1)\int_{0}^{1}\kappa^{p/2+a+1}(1-\kappa)^{b-2}h(\kappa)\left\{\exp(w\{1-\kappa\})-1\right\} \rd \kappa. 
\end{split}
\end{equation}
Similarly we have
\begin{equation}\label{int.part.2}
 \begin{split}
 & w \int_{0}^{1}\kappa^{p/2+a}(1-\kappa)^{b-1}h(\kappa)\exp(w\{1-\kappa\}) \rd \kappa\\
&=(p/2+a)\int_{0}^{1}\kappa^{p/2+a-1}(1-\kappa)^{b-1}h(\kappa)\left\{\exp(w\{1-\kappa\})-1\right\} \rd \kappa \\
&\quad +\int_{0}^{1}\kappa^{p/2+a}(1-\kappa)^{b-1}h'(\kappa)\left\{\exp(w\{1-\kappa\})-1\right\} \rd \kappa \\
&\quad -(b-1)\int_{0}^{1}\kappa^{p/2+a}(1-\kappa)^{b-2}h(\kappa)\left\{\exp(w\{1-\kappa\})-1\right\} \rd \kappa .
\end{split}
\end{equation}
Recall $ H(\kappa)=\kappa h'(\kappa)/h(\kappa)$ and let
\begin{equation}
G(\kappa)=\kappa^{p/2+a-1}(1-\kappa)^{b-1}h(\kappa). 
\end{equation}
Then, by \eqref{int.part.1} and \eqref{int.part.2}, we have
\begin{equation}\label{D.D1.D2}
\Delta(w)=\frac{p}{2}-a-2+(1-b)
\Delta_1(w)+\Delta_2(w),
\end{equation}
where
\begin{equation}
 \begin{split}
 \Delta_1(w)&=
 \frac{\int_{0}^{1}\kappa(1-\kappa)^{-1}\left\{\exp(w\{1-\kappa\})-1\right\}
G(\kappa)\rd \kappa}
{\int_{0}^{1}\exp(w\{1-\kappa\} ) G(\kappa) \rd \kappa}\\
&\quad -2\frac{\int_{0}^{1}\kappa^2(1-\kappa)^{-1}\left\{\exp(w\{1-\kappa\})-1\right\}G(\kappa) \rd \kappa}
{\int_{0}^{1}\kappa \exp(w\{1-\kappa\} ) G(\kappa)\rd \kappa},
\end{split}
\end{equation}
\begin{align}
& \Delta_2(w)\notag\\
&= 2\frac{(p/2+a+1)\int_{0}^{1}\kappa G(\kappa)\rd \kappa }
{ \int_{0}^{1}\kappa \exp(w\{1-\kappa\} ) G(\kappa)\rd \kappa}
 -\frac{(p/2+a)\int_{0}^{1} G(\kappa)\rd \kappa}
{ \int_{0}^{1}\exp(w\{1-\kappa\} ) G(\kappa)\rd \kappa}\label{D2}\\
&\quad 
+\frac{\int_{0}^{1} H(\kappa)\{\exp(w\{1-\kappa\})-1\}G(\kappa)\rd \kappa }
{\int_{0}^{1} \exp(w\{1-\kappa\})G(\kappa)\rd \kappa}
-2\frac{\int_{0}^{1}\kappa H(\kappa)\{\exp(w\{1-\kappa\})-1\}G(\kappa)\rd \kappa }
{\int_{0}^{1}\kappa \exp(w\{1-\kappa\})G(\kappa)\rd \kappa}.\notag
\end{align}

[The lower bound of $\Delta_1(w)$]\quad 
Note the expansion,
\begin{equation}
 \exp(w\{1-\kappa\})-1=\sum_{j=1}^\infty\frac{w^j(1-\kappa)^j}{j!}.
\end{equation}
Then we have
\begin{equation}
\Delta_1(w) \geq 
-2\frac{\int_{0}^{1}\kappa^2(1-\kappa)^{-1}\left\{\exp(w\{1-\kappa\})-1\right\}G(\kappa) \rd \kappa}{\int_{0}^{1}\kappa \left\{\exp(w\{1-\kappa\})-1\right\}G(\kappa) \rd \kappa}.
\end{equation}
Since the correlation inequality (Lemma~\ref{lem:cor}) gives
\begin{equation}
 \int_{0}^{1}\frac{\kappa^2}{(1-\kappa)}(1-\kappa)^{j} G(\kappa)\rd \kappa 
\leq \frac{\int_{0}^{1}\kappa^2 G(\kappa) \rd \kappa}
{\int_{0}^{1}\kappa(1-\kappa) G(\kappa) \rd \kappa}
\int_{0}^{1}\kappa(1-\kappa)^{j} G(\kappa)\rd \kappa
\end{equation}
for $j\geq 1$,
we have
\begin{equation}
\frac{\int_{0}^{1}\kappa^2(1-\kappa)^{-1}\left\{\exp(w\{1-\kappa\})-1\right\}G(\kappa) \rd \kappa}{\int_{0}^{1}\kappa\left\{\exp(w\{1-\kappa\})-1\right\}G(\kappa) \rd \kappa}
\leq 
\frac{\int_{0}^{1}\kappa^2 G(\kappa) \rd \kappa}
{\int_{0}^{1}\kappa(1-\kappa) G(\kappa) \rd \kappa}
\end{equation}
and
\begin{equation}
 \Delta_1(w) \geq -2
\frac{\int_{0}^{1}\kappa^2 G(\kappa) \rd \kappa}
{\int_{0}^{1}\kappa(1-\kappa) G(\kappa) \rd \kappa}
=-2\frac{\int_{0}^{1} \kappa^{p/2+a+1}(1-\kappa)^{b-1}h(\kappa) \rd \kappa}
{\int_{0}^{1} \kappa^{p/2+a}(1-\kappa)^{b}h(\kappa) \rd \kappa}.\label{D1.final}
\end{equation}
Further an integration by parts gives
\begin{equation}
 \begin{split}
& b\int_{0}^{1} \kappa^{p/2+a+1}(1-\kappa)^{b-1}h(\kappa) \rd \kappa \\
&=\left[-\kappa^{p/2+a+1}(1-\kappa)^{b}h(\kappa)\right]_0^1 +(p/2+a+1)\int_{0}^{1} \kappa^{p/2+a}(1-\kappa)^{b}h(\kappa) \rd \kappa \\
&\quad +\int_{0}^{1} \kappa^{p/2+a}(1-\kappa)^{b}H(\kappa)h(\kappa) \rd \kappa \\
&\leq \left\{p/2+a+1+\max_{\kappa\in[0,1]} H_2(\kappa)\right\}\int_{0}^{1} \kappa^{p/2+a}(1-\kappa)^{b}h(\kappa) \rd \kappa,
\end{split}
\end{equation}
where the inequality follows from
\begin{equation}
 H(\kappa)=H_1(\kappa)+H_2(\kappa)\leq H_2(\kappa)\leq\max_{\kappa\in[0,1]} H_2(\kappa).
\end{equation}
Then
\begin{equation}
 \Delta_1(w) \geq -2\frac{p/2+a+1+\max_{\kappa\in[0,1]} H_2(\kappa)}{b}.
\end{equation}
[The lower bound of $\Delta_2(w)$]\quad 
The correlation inequality (Lemma~\ref{lem:cor}) gives
\begin{equation}
 \frac{\int_{0}^{1} G(\kappa)\rd \kappa}
{ \int_{0}^{1}\exp(w\{1-\kappa\} ) G(\kappa)\rd \kappa}
\leq \frac{\int_{0}^{1}\kappa G(\kappa)\rd \kappa }
{ \int_{0}^{1}\kappa \exp(w\{1-\kappa\} ) G(\kappa)\rd \kappa}
\end{equation}
and hence, for the first and second terms of \eqref{D2}, we have
\begin{equation}\label{D2.1}
 \begin{split}
& 2\frac{(p/2+a+1)\int_{0}^{1}\kappa G(\kappa)\rd \kappa }
{ \int_{0}^{1}\kappa \exp(w\{1-\kappa\} ) G(\kappa)\rd \kappa}
 -\frac{(p/2+a)\int_{0}^{1} G(\kappa)\rd \kappa}
{ \int_{0}^{1}\exp(w\{1-\kappa\} ) G(\kappa)\rd \kappa}\\
&\geq 
\frac{(p/2+a+2)\int_{0}^{1}\kappa G(\kappa)\rd \kappa }
{ \int_{0}^{1}\kappa \exp(w\{1-\kappa\} ) G(\kappa)\rd \kappa}.
\end{split}
\end{equation}
By \eqref{capital.H}, we have
\begin{equation}\label{H1H2}
 \begin{split}
& \frac{\int_{0}^{1} H(\kappa)\{\exp(w\{1-\kappa\})-1\}G(\kappa)\rd \kappa }
{\int_{0}^{1} \exp(w\{1-\kappa\})G(\kappa)\rd \kappa}
-2\frac{\int_{0}^{1}\kappa H(\kappa)\{\exp(w\{1-\kappa\})-1\}G(\kappa)\rd \kappa }
{\int_{0}^{1}\kappa \exp(w\{1-\kappa\})G(\kappa)\rd \kappa} \\
& =
\sum_{i=1}^2\left(\frac{\int_{0}^{1} H_i(\kappa)\{\exp(w\{1-\kappa\})-1\}G(\kappa)\rd \kappa }
{\int_{0}^{1} \exp(w\{1-\kappa\})G(\kappa)\rd \kappa}-2\frac{\int_{0}^{1}\kappa H_i(\kappa)\{\exp(w\{1-\kappa\})-1\}G(\kappa)\rd \kappa }
{\int_{0}^{1}\kappa \exp(w\{1-\kappa\})G(\kappa)\rd \kappa}\right).
\end{split}
\end{equation}
Recall $H_1(\kappa)$ is monotone non-increasing. Then,
by the correlation inequality (Lemma~\ref{lem:cor}), we have
\begin{equation}
 \frac{\int_{0}^{1} H_1(\kappa)\exp(w\{1-\kappa\})G(\kappa)\rd \kappa }
{\int_{0}^{1} \exp(w\{1-\kappa\})G(\kappa)\rd \kappa}
\geq 
\frac{\int_{0}^{1}\kappa H_1(\kappa)\exp(w\{1-\kappa\})G(\kappa)\rd \kappa }
{\int_{0}^{1}\kappa \exp(w\{1-\kappa\})G(\kappa)\rd \kappa}
\end{equation}
and hence
\begin{equation}\label{D2.2}
 \begin{split}
& \frac{\int_{0}^{1} H_1(\kappa)\{\exp(w\{1-\kappa\})-1\}G(\kappa)\rd \kappa }
{\int_{0}^{1} \exp(w\{1-\kappa\})G(\kappa)\rd \kappa}-2\frac{\int_{0}^{1}\kappa H_1(\kappa)\{\exp(w\{1-\kappa\})-1\}G(\kappa)\rd \kappa }
{\int_{0}^{1}\kappa \exp(w\{1-\kappa\})G(\kappa)\rd \kappa}\\
& \geq \frac{-\int_{0}^{1}\kappa H_1(\kappa)\exp(w\{1-\kappa\})G(\kappa)\rd \kappa 
+2 \int_{0}^{1}\kappa H_1(\kappa)G(\kappa)\rd \kappa }
{\int_{0}^{1}\kappa \exp(w\{1-\kappa\})G(\kappa)\rd \kappa}\\
& \geq \frac{\int_{0}^{1}\kappa H_1(\kappa)G(\kappa)\rd \kappa }
{\int_{0}^{1}\kappa \exp(w\{1-\kappa\})G(\kappa)\rd \kappa}.
\end{split}
\end{equation}
Further, for the $H_2$ part in \eqref{H1H2}, we have
\begin{equation}\label{D2.3}
 \begin{split}
 & \frac{\int_{0}^{1} H_2(\kappa)\{\exp(w\{1-\kappa\})-1\}G(\kappa)\rd \kappa }
{\int_{0}^{1} \exp(w\{1-\kappa\})G(\kappa)\rd \kappa}-2\frac{\int_{0}^{1}\kappa H_2(\kappa)\{\exp(w\{1-\kappa\})-1\}G(\kappa)\rd \kappa }
{\int_{0}^{1}\kappa \exp(w\{1-\kappa\})G(\kappa)\rd \kappa}\\
&\geq -2
\frac{\int_{0}^{1}\kappa H_2(\kappa)\exp(w\{1-\kappa\})G(\kappa)\rd \kappa}
{\int_{0}^{1}\kappa \exp(w\{1-\kappa\})G(\kappa)\rd \kappa} \\
&\geq -2\max_{\kappa\in[0,1]}H_2(\kappa). 
\end{split}
\end{equation}
Then, by \eqref{D2.1}, \eqref{D2.2}, and \eqref{D2.3}, we have
\begin{equation}\label{D2.final}
 \begin{split}
 \Delta_2(w)
&\geq \frac{\int_{0}^{1}\{p/2+a+2+H_1(\kappa)\}\kappa G(\kappa)\rd \kappa }
{\int_{0}^{1}\kappa \exp(w\{1-\kappa\})G(\kappa)\rd \kappa}-2\max_{\kappa\in[0,1]}H_2(\kappa)\\
&\geq \min\left\{0,p/2+a+2+H_1(1)\right\}-2\max_{\kappa\in[0,1]}H_2(\kappa).
\end{split}
\end{equation}

Recall 
\begin{equation}
\Delta(w)= \frac{p}{2}-a-2+(1-b)
\Delta_1(w)+\Delta_2(w),
\end{equation}
as in \eqref{D.D1.D2}.
Further, as in \eqref{D1.final} and \eqref{D2.final}, 
the lower bound of $\Delta_1(w)$ and $\Delta_2(w)$ is negative and non-positive respectively.
Hence the sufficient condition $\Delta(w)\geq 0$ cannot hold if $ a\geq p/2-2$.
Let $a<p/2-2$. Then, by \eqref{D.D1.D2}, \eqref{D1.final} and \eqref{D2.final}, 
we have
\begin{equation}
\Delta(w)\geq 
\frac{3p}{2}+a-\frac{p+2a+2+2\max_{\kappa\in[0,1]} H_2(\kappa)}{b}
+\min\left\{0,p/2+a+2+H_1(1)\right\},
\end{equation}
which completes the proof.
\end{proof}

When $H(\kappa)$ is monotone non-increasing, it follows that
\begin{equation}\label{cor.1}
 H_1(\kappa)=H(\kappa) \text{ and }H_2(\kappa)\equiv 0
\end{equation}
for \eqref{capital.H}. 
When $H(\kappa)$ is monotone non-decreasing, it follows that
\begin{equation}\label{cor.2}
 H_1(\kappa)=0, \ H_2(\kappa)=H(\kappa)\text{ and }\max_{\kappa\in[0,1]} H_2(\kappa)=H(1),
\end{equation}
for \eqref{capital.H}. 
Then we have the following corollary.
\begin{corollary}\label{cor.minimax}
Suppose $-p/2\leq a<p/2-2$. 
\begin{enumerate}
 \item \label{cor.minimax.1}
Suppose $H(\kappa)$ is monotone non-increasing with $H(1)\geq -(p/2+a+2)$.
Then the (generalized) Bayes estimator with respect to the prior \eqref{our.prior} is minimax if 
\begin{align}
\frac{p+2a+2}{3p/2+a}\leq b<1.
\end{align}
\item \label{cor.minimax.2}
Suppose $H(\kappa)$ is monotone non-decreasing with $H(1)< (p/2-a-2)/2$.
Then the (generalized) Bayes estimator with respect to the prior \eqref{our.prior} is minimax if 
\begin{equation}
\frac{p+2a+2+2H(1)}{3p/2+a}\leq b<1.
\end{equation}
\end{enumerate}
\end{corollary}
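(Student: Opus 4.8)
The plan is to obtain the corollary directly from Theorem~\ref{thm.minimax} by specializing the quantities $H_1$ and $H_2$ to the two monotone regimes recorded in \eqref{cor.1} and \eqref{cor.2}, and then simply solving the resulting instance of \eqref{thm.minimax.eq.1} for $b$. I would first note that \eqref{cor.1} and \eqref{cor.2} are themselves immediate from the definitions in \eqref{capital.H} together with $\lim_{\kappa\to 0}H(\kappa)=0$ (from \eqref{hh}): if $H$ is non-increasing then $H_1(\kappa)=\inf_{t\le\kappa}H(t)=H(\kappa)$ and $H_2\equiv 0$, while if $H$ is non-decreasing then $H_1(\kappa)=\inf_{t\le\kappa}H(t)=H(0)=0$, $H_2=H$, and $\max_{\kappa\in[0,1]}H_2(\kappa)=H(1)$.

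For Part~\ref{cor.minimax.1}, I would substitute $\max_{\kappa\in[0,1]}H_2(\kappa)=0$ and $H_1(1)=H(1)$ into \eqref{thm.minimax.eq.1}, turning its left-hand side into $\tfrac{3p}{2}+a-\tfrac{p+2a+2}{b}+\min\{0,\,p/2+a+2+H(1)\}$. The hypothesis $H(1)\ge -(p/2+a+2)$ makes the argument of the $\min$ nonnegative, so that term is $0$, and the sufficient condition reduces to $\tfrac{3p}{2}+a\ge\tfrac{p+2a+2}{b}$. Since $a\ge -p/2$ gives $3p/2+a\ge p>0$ and $p+2a+2\ge 2>0$, this is equivalent to $b\ge (p+2a+2)/(3p/2+a)$; combined with $b<1$ from \ref{A.2} this is the claimed interval, which is nonempty precisely because $a<p/2-2$ is equivalent to $p+2a+2<3p/2+a$.

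For Part~\ref{cor.minimax.2}, I would instead substitute $\max_{\kappa\in[0,1]}H_2(\kappa)=H(1)$ and $H_1(1)=0$, so the left-hand side of \eqref{thm.minimax.eq.1} becomes $\tfrac{3p}{2}+a-\tfrac{p+2a+2+2H(1)}{b}+\min\{0,\,p/2+a+2\}$. Here $a\ge -p/2$ gives $p/2+a+2\ge 2>0$, so again the $\min$ term vanishes, and the condition becomes $b\ge (p+2a+2+2H(1))/(3p/2+a)$. The extra hypothesis $H(1)<(p/2-a-2)/2$ is exactly what forces this lower bound to be strictly below $1$, so together with $b<1$ it yields the stated range.

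I do not expect a genuine obstacle: the corollary is a bookkeeping consequence of the theorem, and the only points that deserve an explicit line are (i) verifying that under each hypothesis set the $\min\{0,\cdot\}$ term in \eqref{thm.minimax.eq.1} is zero, so the bound becomes affine in $1/b$, and (ii) checking that the resulting lower endpoint for $b$ actually lies in $(0,1)$, which is guaranteed by $-p/2\le a<p/2-2$ in both parts and, in Part~\ref{cor.minimax.2}, additionally by $H(1)<(p/2-a-2)/2$.
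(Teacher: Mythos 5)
Your proposal is correct and is exactly the argument the paper intends: the corollary is obtained by substituting the two monotone cases \eqref{cor.1} and \eqref{cor.2} into condition \eqref{thm.minimax.eq.1} of Theorem~\ref{thm.minimax}, observing that the $\min\{0,\cdot\}$ term vanishes under each stated hypothesis, and solving the remaining inequality for $b$. Your added checks that $3p/2+a>0$ and that the lower endpoint lies below $1$ (via $a<p/2-2$ and, in the second part, $H(1)<(p/2-a-2)/2$) are the only details the paper leaves implicit.
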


Before \cite{Fourdrinier-etal-1998}, minimaxity of estimators was typically shown through so-called 
\citeapos{Baranchik-1970} sufficient condition, which states that
the shrinkage estimator
\begin{equation}
 \left(1-\frac{\phi(\|y\|^2)}{\|y\|^2}\right)y
\end{equation}
is minimax if $\phi$ is monotone non-decreasing and $0\leq \phi(w)\leq 2(p-2)$.
For the (generalized) Bayes estimator with respect to the prior \eqref{our.prior}, the function $\phi$ is obtained from Tweedie's formula \eqref{hat.beta} as
\begin{equation}
 \phi(\|y\|^2)
=\|y\|^2\frac {\int_0^1\kappa^{p/2+1}\exp(-\kappa\|y\|^2/2)\kappa^{a-1}(1-\kappa)^{b-1}h(\kappa)\rd\kappa}
{\int_0^1\kappa^{p/2}\exp(-\kappa\|y\|^2/2)\kappa^{a-1}(1-\kappa)^{b-1}h(\kappa)\rd\kappa}. \label{phi}
\end{equation}
Then, we have the following result on the non-monotonicity of $\phi$, which means that we cannot employ \citeapos{Baranchik-1970} sufficient condition.  
\begin{lem}\label{lem.non.monotonicity}
\begin{enumerate}
 \item $\lim_{\|y\|^2\to\infty}\phi(\|y\|^2)=p+2a$.
\item \label{lem.non.monotonicity2}
Suppose
\begin{equation}\label{b.h}
1-b+ \liminf_{\kappa\to 0}\frac{h'(\kappa)}{h(\kappa)}>0.
\end{equation}
Then $ \phi$ is not monotone and approaches $p+2a$ from the above.
\end{enumerate}
\end{lem}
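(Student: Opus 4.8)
The plan is to analyze $\phi$ via its posterior representation. With $w=\|y\|^2/2$ and
\[
D(w)=\int_0^1\kappa^{p/2+a-1}(1-\kappa)^{b-1}h(\kappa)e^{-w\kappa}\rd\kappa,
\]
formula \eqref{phi} reads $\phi(\|y\|^2)=2w\,E_w[\kappa]$, where $E_w$ is expectation under the probability density proportional to $\kappa^{p/2+a-1}(1-\kappa)^{b-1}h(\kappa)e^{-w\kappa}$ on $(0,1)$. I would first record that $D(0)=(2\pi)^{p/2}m_*(0)$ is finite by Lemma~\ref{lem.prop.prior}, and that, since $h$ is slowly varying at $0$, there is $c>0$ with $D(w)\ge c\,w^{-(p/2+a)}h(1/w)$ for all large $w$; as this bound decays more slowly than every $e^{-\varepsilon w}$, the posterior concentrates at $\kappa=0$ (indeed $E_w[\kappa]\le\delta+o(1)$ for each $\delta>0$, so $E_w[\kappa]\to0$) and the posterior mass of any fixed $[\kappa_0,1]$, $\kappa_0>0$, is exponentially small. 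Dividing the integration-by-parts identity \eqref{int.part.2} from the proof of Theorem~\ref{thm.minimax} by $e^{w}D(w)$, and keeping the factor $1-e^{-w(1-\kappa)}$ intact (since the weight $(1-\kappa)^{b-2}$ is not integrable at $\kappa=1$), I obtain the exact identity
\begin{equation}
\begin{aligned}
\frac{\phi(\|y\|^2)}{2}={}&\Bigl(\frac p2+a\Bigr)\!\left(1-\frac{e^{-w}D(0)}{D(w)}\right)+E_w\!\bigl[H(\kappa)\{1-e^{-w(1-\kappa)}\}\bigr]\\
&{}+(1-b)\,E_w\!\left[\frac{\kappa}{1-\kappa}\{1-e^{-w(1-\kappa)}\}\right].
\end{aligned}
\end{equation}

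For part~1 I would let $w\to\infty$ in this identity. The first bracketed factor tends to $1$ by the lower bound on $D(w)$; $E_w[|H(\kappa)|]\to0$ because $H(\kappa)\to0$ as $\kappa\to0$ by \eqref{hh} while on $[\kappa_0,1]$ the bounded $|H|$ meets only exponentially small posterior mass; and the last expectation $\to0$ after splitting at $\kappa=\tfrac12$ and using $1-e^{-w(1-\kappa)}\le\min\{1,w(1-\kappa)\}$, so that the $(0,\tfrac12]$ part contributes at most $2E_w[\kappa]\to0$ and the $(\tfrac12,1)$ part at most $w$ times the (exponentially small) posterior mass of $(\tfrac12,1)$, hence $\to0$. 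Therefore $\phi(\|y\|^2)\to p+2a$.

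For part~2 I would push the same three estimates one order in $1/w$. Part~1 gives $E_w[\kappa]=(p/2+a)/w+o(1/w)$. Under \eqref{b.h} choose $\ell^-$ with $b-1<\ell^-\le\liminf_{\kappa\to0}h'(\kappa)/h(\kappa)$; then $H(\kappa)=\kappa h'(\kappa)/h(\kappa)\ge\ell^-\kappa$ for all small $\kappa$, and, the $\{\kappa>\kappa_0\}$ contribution being exponentially small, $E_w[H(\kappa)]\ge\ell^-(p/2+a)/w+o(1/w)$. Similarly, from $1/(1-\kappa)\ge1$ and the fact that $E_w[\kappa\,e^{-w(1-\kappa)}]=e^{-w}\int_0^1\kappa^{p/2+a}(1-\kappa)^{b-1}h(\kappa)\rd\kappa/D(w)$ is exponentially small, the last expectation is $\ge E_w[\kappa]-o(1/w)=(p/2+a)/w+o(1/w)$. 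Substituting into the identity,
\begin{equation}
\frac{\phi(\|y\|^2)}{2}\ \ge\ \frac p2+a+\frac{(p/2+a)\,(1-b+\ell^-)}{w}+o(1/w),
\end{equation}
and since $1-b+\ell^->0$ by the choice of $\ell^-$ and $p/2+a>0$ for $a>-p/2$, the $1/w$ term is strictly positive and dominates, so $\phi(\|y\|^2)>p+2a$ for all large $\|y\|^2$; i.e.\ $\phi$ approaches $p+2a$ from above. Non-monotonicity follows since $\phi$ is continuous with $\phi(0)=0<p+2a$ and $\phi(\|y\|^2)\to p+2a$: a monotone $\phi$ would satisfy $\phi\le p+2a$ everywhere (if non-decreasing) or $\phi\le0$ everywhere (if non-increasing), both contradicting $\phi>p+2a$ for large $\|y\|^2$. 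In the boundary case $a=-p/2$, $p+2a=0=\phi(0)$ while $\phi(\|y\|^2)=2w\,E_w[\kappa]>0$ for $\|y\|>0$ and $\phi(\|y\|^2)\to0$ by part~1, so $\phi$ is again non-monotone and stays above its limit.

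The step I expect to be the main obstacle is the uniform error control behind the ``$o(1/w)$'' and ``exponentially small'' claims. One must (i) carry the factor $1-e^{-w(1-\kappa)}$ through all the comparisons rather than splitting it off, because of the non-integrable weight $(1-\kappa)^{b-2}$ near $\kappa=1$; (ii) convert the slow variation of $h$ into the quantitative lower bound $D(w)\ge c\,w^{-(p/2+a)}h(1/w)$ (via the uniform convergence theorem for slowly varying functions), so that every $e^{-w}$-type remainder is genuinely negligible against $1/w$; and (iii) exploit that \eqref{b.h} controls only $\liminf h'/h$, so that only a one-sided (lower) estimate of $E_w[H(\kappa)]$ is available --- which is exactly what ``approaches $p+2a$ from above'' needs, but no more.
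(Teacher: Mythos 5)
Your proof is correct and follows essentially the same route as the paper: your exact identity for $\phi/2$ is the paper's integration-by-parts formula \eqref{int.part.phi} normalized by the denominator, the Tauberian/Karamata asymptotics for $D(w)$ play the same role, and the strictly positive $O(1/w)$ excess comes from the same source, namely that $(1-b)/(1-\kappa)+h'(\kappa)/h(\kappa)$ is bounded below by a positive constant near $\kappa=0$ under \eqref{b.h} (you split this into the $H$-part and the $(1-b)$-part and bound each separately, where the paper bounds the sum at once via its inequality \eqref{phi.1}). The only substantive differences are presentational --- the posterior-expectation phrasing and your explicit treatment of the boundary case $a=-p/2$, which the paper's displayed Tauberian limits gloss over.
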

Suppose $H(\kappa)=\kappa h'(\kappa)/h(\kappa)$ 
is monotone non-decreasing. Since $H(0)=0$ by \ref{A.1} and \eqref{capital.H}, 
both $H(\kappa)$ and $h'(\kappa)/h(\kappa)=H(\kappa)/\kappa$ are non-negative for $\kappa\in[0,1]$.
Then we have
\begin{equation}
 \liminf_{\kappa\to 0}\frac{h'(\kappa)}{h(\kappa)}\geq 0 \ \text{ and } \
1-b+ \liminf_{\kappa\to 0}\frac{h'(\kappa)}{h(\kappa)}>0.
\end{equation}
Thus, from Part \ref{lem.non.monotonicity2} of Lemma~\ref{lem.non.monotonicity}, $\phi$ is non-monotone.
Hence, Part \ref{cor.minimax.2} of Corollary \ref{cor.minimax} provides the minimaxity of
(generalized) Bayes estimators with non-monotone $\phi$.
\begin{proof}[Proof of Lemma \ref{lem.non.monotonicity}]
Let $w=\|y\|^2/2$. Then
\begin{equation}
\phi(w) =2w\frac{\int_0^1\kappa^{p/2+a}\exp(-\kappa w)(1-\kappa)^{b-1}h(\kappa)\rd\kappa}
{\int_0^1\kappa^{p/2+a-1}\exp(-\kappa w)(1-\kappa)^{b-1}h(\kappa)\rd\kappa}.
\end{equation}
A Tauberian theorem (see, e.g., Theorem 13.5.4 in \cite{Feller-1971}) gives
\begin{equation}
\begin{split}
 \lim_{w\to\infty}
\frac{\int_0^1\kappa^{p/2+a}\exp(-\kappa w)(1-\kappa)^{b-1}h(\kappa)\rd\kappa}{w^{-p/2-a-1}h(1/w)}&=\Gamma(p/2+a+1)\\
 \lim_{w\to\infty}
\frac{\int_0^1\kappa^{p/2+a-1}\exp(-\kappa w)(1-\kappa)^{b-1}h(\kappa)\rd\kappa}{w^{-p/2-a}h(1/w)}&=\Gamma(p/2+a),
\end{split}
\end{equation}
which implies that
\begin{equation}
\lim_{\|y\|^2\to\infty} \phi(\|y\|^2)=p+2a.
\end{equation}

The way to approach to $p+2a$ determines $(1-\kappa)^{b-1}h(\kappa)$ as follows.
Under the condition \eqref{b.h},
there exists $0<\kappa_0<1$ and $ \epsilon>0$ such that
\begin{equation}\label{phi.1}
 \frac{1-b}{1-\kappa}+ \frac{h'(\kappa)}{h(\kappa)}\geq \epsilon
\end{equation}
for any $\kappa \in [0, \kappa_0]$.
An integration by parts with \eqref{key.derivative} and \eqref{int.part.1}
gives
\begin{equation}\label{int.part.phi}
\begin{split}
 & w\int_0^1\kappa^{p/2+a}\exp(-\kappa w)(1-\kappa)^{b-1}h(\kappa)\rd\kappa\\
& =(p/2+a)\int_0^1\kappa^{p/2+a-1}(1-\kappa)^{b-1}h(\kappa)\left\{\exp(-\kappa w)-\exp(-w)\right\}\rd\kappa\\
& \quad +\int_0^1\kappa^{p/2+a}\left\{(1-\kappa)^{b-1}h(\kappa)\right\}'\left\{\exp(-\kappa w)-\exp(-w)\right\}\rd\kappa\\
& =(p/2+a)\int_0^1\kappa^{p/2+a-1}(1-\kappa)^{b-1}h(\kappa)\exp(-\kappa w)\rd\kappa\\
&\quad -\frac{(p/2+a)\int_0^1\kappa^{p/2+a-1}(1-\kappa)^{b-1}h(\kappa)\rd\kappa+\int_0^{\epsilon_0}\kappa^{p/2+a}\left\{(1-\kappa)^{b-1}h(\kappa)\right\}'\rd\kappa}{\exp(w)} \\
& \quad +\int_0^{\epsilon_0}\kappa^{p/2+a}\left\{(1-\kappa)^{b-1}h(\kappa)\right\}'\exp(-\kappa w)\rd\kappa\\
&\quad +\int_{\epsilon_0}^1\kappa^{p/2+a}\left\{(1-\kappa)^{b-1}h(\kappa)\right\}'\left\{\exp(-\kappa w)-\exp(-w)\right\}\rd\kappa.
\end{split}
\end{equation}
For the last term, we have
\begin{equation}
\begin{split}
 \exp(-\kappa w)-\exp(-w)&= \exp(-\kappa w)\left\{1-\exp(-\{1-\kappa\} w)\right\}\\
&\leq \exp(-\kappa w)(1-\kappa)w 
\end{split} 
\end{equation}
and hence
\begin{equation}
\begin{split}
 & \left|\int_{\epsilon_0}^1\kappa^{p/2+a}\left\{(1-\kappa)^{b-1}h(\kappa)\right\}'\left\{\exp(-\kappa w)-\exp(-w)\right\}\rd\kappa \right|\\
 &\leq \frac{w}{\exp(\kappa_0 w)}\int_{\epsilon_0}^1\kappa^{p/2+a}\left|\left\{(1-\kappa)^{b-1}h(\kappa)\right\}'\right|
(1-\kappa)\rd \kappa.
\end{split}
\end{equation}
For the third term,
\begin{equation}
\begin{split}
 & \int_0^{\epsilon_0}\kappa^{p/2+a}\left\{(1-\kappa)^{b-1}h(\kappa)\right\}'\exp(-\kappa w)\rd\kappa\\
&= \int_0^{\epsilon_0}\kappa^{p/2+a}\left\{\frac{1-b}{1-\kappa}+\frac{h'(\kappa)}{h(\kappa)}
\right\}(1-\kappa)^{b-1}h(\kappa)\exp(-\kappa w)\rd\kappa \\
&\geq \epsilon\int_0^{\epsilon_0}\kappa^{p/2+a}(1-\kappa)^{b-1}h(\kappa)\exp(-\kappa w)\rd\kappa.
\end{split}
\end{equation}
Thus, by \eqref{phi.1}, we have
\begin{equation}
 \lim_{w\to\infty}
\frac{\int_0^{\epsilon_0}\kappa^{p/2+a}\left\{(1-\kappa)^{b-1}h(\kappa)\right\}'\exp(-\kappa w)\rd\kappa}{w^{-p/2-a-1}h(1/w)}\geq \epsilon \Gamma(p/2+a+1)
\end{equation}
and
\begin{equation}
\liminf_{\|y\|^2\to\infty} \|y\|^2\left\{\phi(\|y\|^2)-(p+2a)\right\}>0,
\end{equation}
which implies that $\phi$ is not monotone and approaches $p+2a$ from the above.
\end{proof}

\section{Choices of $a$, $b$ and $h(\kappa)$}
\label{sec.example}
In this section, some interesting choices of $a$, $b$ and $h(\kappa)$ in the prior \eqref{our.prior} satisfying
Corollary \ref{cor.minimax} are provided.

\subsection{A variant of half-Cauchy with $h(\kappa)\equiv 1$}
Recall that the prior \eqref{our.prior} coincides with the half-Cauchy prior when $a=b=1/2$ and $h(\kappa)\equiv 1$.
Although Figure 2 of \cite{Polson-Scott-2012} suggests the minimaxity of
the Bayes estimator under the half-Cauchy prior numerically, 
the choice $a=b=1/2$ is not included in Corollary \ref{cor.minimax} with $H(\kappa)\equiv 0$.
Among the prior \eqref{our.prior} with $a=b$ and $H(\kappa)\equiv 0$,
\begin{equation}
a=b=a_*=\frac{-3p+4+\sqrt{9p^2-8p+48}}{4}\in(0,1), \label{a_star}
\end{equation}
is the smallest choice satisfying Corollary \ref{cor.minimax} for $p\geq 7$.
Thus we find the choice 
\begin{equation}
 a=b= a_*, \quad h(\kappa)\equiv 1.
\end{equation}

\subsection{Log-adjusted prior}
\label{sec.ex1}
Let 
\begin{equation}
 h(\kappa)=\left\{1+c_1\log(1/\kappa)\right\}^{c_2},\label{prior.ex1}
\end{equation}
for $c_1>0$. Then
\begin{equation}
 H(\kappa)= \kappa\frac{h'(\kappa)}{h(\kappa)}=-c_2\frac{c_1}{1+c_1\log(1/\kappa)},
\end{equation}
which is monotone increasing/decreasing if $c_2$ is negative/positive, respectively. 
By Corollary \ref{cor.minimax} with $H(1)=-c_1c_2$, 
we have the following result.
\begin{corollary}\label{cor.minimax.example}
Let $-p/2\leq a<p/2-2$ and $h(\kappa)=\left\{1+c_1\log(1/\kappa)\right\}^{c_2}$ with $c_1>0$.
\begin{enumerate}
 \item \label{cor.minimax.example.1}
Suppose $0<c_2\leq (p/2+a+2)/c_1$.
Then the (generalized) Bayes estimator with respect to the prior \eqref{our.prior} is minimax if 
\begin{equation}
\frac{p+2a+2}{3p/2+a}\leq b<1.
\end{equation}
\item \label{cor.minimax.example.2}
Suppose $-(p/2-a-2)/(2c_1)<c_2< 0$.
Then the (generalized) Bayes estimator with respect to the prior \eqref{our.prior} is minimax if 
\begin{equation}
\frac{p+2a+2-2c_1c_2}{3p/2+a}\leq b<1.
\end{equation}
\end{enumerate}
\end{corollary}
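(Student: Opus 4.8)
The plan is to recognise Corollary~\ref{cor.minimax.example} as a direct specialisation of Corollary~\ref{cor.minimax} to $h(\kappa)=\{1+c_1\log(1/\kappa)\}^{c_2}$, so the work reduces to: (i) checking that this $h$ satisfies \ref{A.1} and \ref{A.2}; (ii) computing $H(\kappa)=\kappa h'(\kappa)/h(\kappa)$ and its monotonicity; and (iii) translating the sign conditions of Corollary~\ref{cor.minimax} into the stated ranges of $c_1,c_2,b$.

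For step (i), note $h(1)=1<\infty$, which is the part of \ref{A.2} concerning $h$. For \ref{A.1}, differentiating $h(\kappa)=\{1-c_1\log\kappa\}^{c_2}$ gives
\[
H(\kappa)=\kappa\frac{h'(\kappa)}{h(\kappa)}=-\frac{c_1c_2}{1+c_1\log(1/\kappa)},
\]
which tends to $0$ as $\kappa\to0$ because $c_1>0$; this also exhibits $h$ as slowly varying at $\kappa=0$. For step (ii), since $\kappa\mapsto 1/\{1+c_1\log(1/\kappa)\}$ increases on $(0,1]$ from $0$ to $1$, the map $H$ is monotone non-increasing when $c_2>0$ and monotone non-decreasing when $c_2<0$, and in either case $H(1)=-c_1c_2$.

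For step (iii), I would apply the two parts of Corollary~\ref{cor.minimax} separately. When $c_2>0$, $H$ is non-increasing, the requirement $H(1)\ge-(p/2+a+2)$ of Part~\ref{cor.minimax.1} reads $c_1c_2\le p/2+a+2$, i.e.\ $c_2\le(p/2+a+2)/c_1$ (non-vacuous since $p/2+a+2\ge2>0$ as $a\ge-p/2$), and Part~\ref{cor.minimax.1} then yields minimaxity for $(p+2a+2)/(3p/2+a)\le b<1$. When $c_2<0$, $H$ is non-decreasing, the requirement $H(1)<(p/2-a-2)/2$ of Part~\ref{cor.minimax.2} reads $-c_1c_2<(p/2-a-2)/2$, i.e.\ $c_2>-(p/2-a-2)/(2c_1)$ (non-vacuous since $a<p/2-2$), and substituting $H(1)=-c_1c_2$ into $(p+2a+2+2H(1))/(3p/2+a)$ gives minimaxity for $(p+2a+2-2c_1c_2)/(3p/2+a)\le b<1$.

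There is no genuine obstacle here: all the analytic content already lives in Corollary~\ref{cor.minimax}, and the only things requiring care are the bookkeeping of the minus sign in $H(1)=-c_1c_2$ when converting to ranges for $c_2$, and confirming that the resulting intervals for $c_2$ are nonempty under the standing hypothesis $-p/2\le a<p/2-2$.
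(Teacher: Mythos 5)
Your proposal is correct and follows exactly the route the paper takes: compute $H(\kappa)=-c_1c_2/\{1+c_1\log(1/\kappa)\}$, note it is monotone non-increasing for $c_2>0$ and non-decreasing for $c_2<0$ with $H(1)=-c_1c_2$, and plug into the two parts of Corollary~\ref{cor.minimax}. The extra checks you include (that \ref{A.1} and \ref{A.2} hold and that the $c_2$-intervals are nonempty) are left implicit in the paper but are correct.
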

As in Part \ref{lem.prop.prior.3} of Lemma \ref{lem.prop.prior}, 
the choice 
\begin{equation}\label{minimax.proper.boundary.1}
 a=0,\quad c_2=-2
\end{equation}
corresponds to a proper prior on the boundary between propriety and impropriety,
since
\begin{equation}
 \int_0^1 \frac{\rd\kappa}{\kappa\{1+c_1\log(1/\kappa)\}^2}=
\int_1^\infty \frac{\rd t}{(1+c_1t)^2}
=\left[\frac{1}{c_1(1+c_1t)}\right]_1^\infty=\frac{1}{c_1(1+c_1)}
\end{equation}
and
\begin{equation}
 \int_0^1 \frac{\rd\kappa}{\kappa\{1+c_1\log(1/\kappa)\}}=
\int_1^\infty \frac{\rd t}{1+c_1t}
=\left[\frac{\log(1+c_1t)}{c_1}\right]_1^\infty=\infty.
\end{equation}
Note that \cite{HIS.2001.08465} utilized a similar U-shape prior with $a=0$ and $c_1<-1$
for the analysis of sparse signals.
Let
\begin{equation}\label{minimax.proper.boundary.2}
c_1=\frac{p/2-a-2}{8}=\frac{p-4}{16}.
\end{equation}
Then, by Part \ref{cor.minimax.example.2} of Corollary \ref{cor.minimax.example},
the corresponding proper Bayes estimator is minimax if $p\geq 5$ and
\begin{equation}\label{minimax.proper.boundary.3}
\frac{5p+4}{6p}\leq b<1.
\end{equation}
Thus we find the choice 
\begin{equation}\label{choice.ex1}
 a=0,\quad b=\frac{5p+4}{6p}, \quad c_1= \frac{p-4}{16},\quad c_2=-2.
\end{equation}


%

\subsection{Hypergeometric inverted-beta prior}
Let 
\begin{equation}
 h(\kappa)=\left(1+c_3\kappa\right)^{c_4}\exp(d\kappa), \label{prior.ex2}
\end{equation}
for $c_3>0$. 
It can be viewed as a generalization of the ``hypergeometric inverted-beta prior'' by \cite{Polson-Scott-2012}, which corresponds to the prior \eqref{our.prior} with $h(\kappa)=(1+c\kappa)^{-1}\exp(d\kappa)$.
Then we have
\begin{equation}
 H(\kappa)= \kappa\frac{h'(\kappa)}{h(\kappa)}=d\kappa+c_4\frac{c_3\kappa}{1+c_3\kappa}
\end{equation}
and
\begin{equation}
 H'(\kappa)=d+c_4\frac{c_3}{(1+c_3\kappa)^2}.
\end{equation}
\begin{lem}\label{lem:c3c4d}
 \begin{enumerate}
  \item The function $H(\kappa)$ is monotone non-increasing if
\begin{equation}
 \begin{cases}
  d\geq 0 \text{ and } c_4\leq -(c_3+1)^2d/c_3, \\
d<0 \text{ and }c_4\leq -d/c_3.
 \end{cases}
\end{equation}
  \item The function $H(\kappa)$ is monotone non-decreasing if
\begin{equation}
 \begin{cases}
d \geq 0  \text{ and }c_4 \geq  -d/c_3 \\
d< 0 \text{ and } c_4> -(c_3+1)^2d/c_3. 
 \end{cases}
\end{equation}
 \end{enumerate}
\end{lem}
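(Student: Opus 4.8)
The plan is to read off the sign of $H'$ on the whole interval $[0,1]$ from its two endpoint values. Writing $H'(\kappa)=d+c_4\,g(\kappa)$ with $g(\kappa)\coloneqq c_3/(1+c_3\kappa)^2$, the first observation is that, since $c_3>0$, the function $g$ is strictly positive and strictly decreasing on $[0,1]$, with $g(0)=c_3$ and $g(1)=c_3/(1+c_3)^2$. Hence $H'$ is itself monotone in $\kappa$ on $[0,1]$ (non-increasing if $c_4\ge 0$, non-decreasing if $c_4\le 0$, constant if $c_4=0$), so its supremum and infimum over $[0,1]$ are attained at the endpoints, where $H'(0)=d+c_3c_4$ and $H'(1)=d+c_3c_4/(1+c_3)^2$.

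Next I would rephrase the two conclusions of the lemma as sign conditions on $H'$. ``$H$ monotone non-increasing on $[0,1]$'' is equivalent to $H'\le 0$ throughout, which, because $H'$ is monotone, holds iff both $H'(0)\le 0$ and $H'(1)\le 0$; likewise ``$H$ monotone non-decreasing'' holds iff both $H'(0)\ge 0$ and $H'(1)\ge 0$. Dividing the two endpoint inequalities by the positive numbers $c_3$ and $c_3/(1+c_3)^2$ respectively rewrites them in terms of the thresholds $T_0\coloneqq -d/c_3$ (from $\kappa=0$) and $T_1\coloneqq -(1+c_3)^2 d/c_3$ (from $\kappa=1$): non-increasingness is equivalent to $c_4\le\min\{T_0,T_1\}$, and non-decreasingness to $c_4\ge\max\{T_0,T_1\}$.

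The final step is to decide which threshold is binding, and the only computation needed is $T_1-T_0=-d(c_3+2)$, so $T_1<T_0$ when $d>0$ and $T_1>T_0$ when $d<0$. Consequently, if $d\ge 0$ then $\min\{T_0,T_1\}=T_1$ and $\max\{T_0,T_1\}=T_0$, which gives $c_4\le -(1+c_3)^2 d/c_3$ for non-increasingness and $c_4\ge -d/c_3$ for non-decreasingness; if $d<0$ the two are interchanged, giving $c_4\le -d/c_3$ for non-increasingness and $c_4\ge -(1+c_3)^2 d/c_3$ for non-decreasingness, the latter being implied by the strict inequality $c_4> -(1+c_3)^2 d/c_3$ stated in the lemma. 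This reproduces the four displayed cases exactly. I do not anticipate any real obstacle here; the only points demanding care are preserving the direction of the inequalities when dividing by the positive coefficients and keeping track of the sign of $d$ in the elementary comparison $T_1-T_0=-d(c_3+2)$.
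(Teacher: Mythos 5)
Your proof is correct, and it reaches the four displayed cases by a route that differs in mechanics from the paper's. Both arguments start from the same derivative $H'(\kappa)=d+c_4\,c_3/(1+c_3\kappa)^2$, but the paper handles the nontrivial case $dc_4<0$ by factoring $H'$ and locating its unique zero $\kappa_*=(\sqrt{-c_3c_4/d}-1)/c_3$, then checking whether $\kappa_*$ falls outside $[0,1]$; you instead observe that $H'$ is monotone in $\kappa$ (since $g(\kappa)=c_3/(1+c_3\kappa)^2$ is positive and decreasing), so the sign of $H'$ on all of $[0,1]$ is controlled by the two endpoint values $H'(0)=d+c_3c_4$ and $H'(1)=d+c_3c_4/(1+c_3)^2$, and then compare the resulting thresholds $T_0=-d/c_3$ and $T_1=-(1+c_3)^2d/c_3$ via $T_1-T_0=-d(c_3+2)$. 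Your version avoids the paper's preliminary case split on the signs of $d$ and $c_4$ and the square-root manipulation, and it delivers the conditions as exact characterizations ($c_4\le\min\{T_0,T_1\}$, resp.\ $c_4\ge\max\{T_0,T_1\}$) rather than merely sufficient ones; the paper's root-location argument, on the other hand, makes visible where in $(0,1)$ the monotonicity of $H$ would break when the conditions fail. All the elementary steps you flag (dividing by the positive coefficients $c_3$ and $c_3/(1+c_3)^2$, tracking the sign of $d$ in $T_1-T_0$) are handled correctly.
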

\begin{proof}
It is clear that $L(\kappa)$ is monotone non-decreasing/non-increasing for \{$d\geq 0$ and $c_4\geq 0$\}
and \{$d\leq 0$ and $c_4\leq 0$\}, respectively.
Further, for $dc_4<0$, we have
\begin{equation}\label{Lprime}
 \begin{split}
 H'(\kappa)&=\frac{d}{(1+c_3\kappa)^2}
\left((1+c_3\kappa)^2-\frac{-c_3c_4}{d}\right)\\
&=\frac{1+c_3\kappa+\sqrt{-c_3c_4/d}}{(1+c_3\kappa)^2/c_3}
d\left(\lambda-\kappa_*\right),
\end{split}
\end{equation}
where
\begin{equation}
 \kappa_*=\frac{\sqrt{-c_3c_4/d}-1}{c_3}.
\end{equation}
Then
$H'(\kappa)\geq 0$ follows
either if \{$d>0$ and $\kappa_*\leq 0$\} or if \{$d<0$ and $\kappa_*\geq 1$\}.
Similarly
$H'(\kappa)\leq 0$ follows
either if \{$d>0$ and $\kappa_*\geq 1$\} or if \{$d<0$ and $\kappa_*\leq 0$\}.
Thus we complete the proof.
 \end{proof}
By Lemma \ref{lem:c3c4d} and Corollary \ref{cor.minimax} with $H(1)=d+c_3c_4/(1+c_3)$, 
we have the following minimaxity result 
on the Bayes estimators under the hypergeometric inverted-beta priors, 
which was not fully investigated in \cite{Polson-Scott-2012}. 

\begin{corollary}\label{2.cor.minimax.example}
Let $-p/2\leq a<p/2-2$ and $h(\kappa)=\left(1+c_3\kappa\right)^{c_4}\exp(d\kappa)$ with $c_3>0$.
\begin{enumerate}
 \item \label{2.cor.minimax.example.1}
Suppose 
\begin{equation}
 \begin{cases}
  d\geq 0 \text{ and } c_4\leq -(c_3+1)^2d/c_3, \\
d<0 \text{ and }c_4\leq -d/c_3, \\
d+c_3c_4/(1+c_3)\geq -(p/2-a-2).
 \end{cases}
\end{equation}
Then the (generalized) Bayes estimator with respect to the prior \eqref{our.prior} is minimax if 
\begin{equation}
\frac{p+2a+2}{3p/2+a}\leq b<1.
\end{equation}
\item \label{2.cor.minimax.example.2}
Suppose 
\begin{equation}
 \begin{cases}
d \geq 0  \text{ and }c_4 \geq  -d/c_3, \\
d< 0 \text{ and } c_4> -(c_3+1)^2d/c_3, \\
d+c_3c_4/(1+c_3)\leq (p/2-a-2)/2.
 \end{cases}
\end{equation}
Then the (generalized) Bayes estimator with respect to the prior \eqref{our.prior} is minimax if 
\begin{equation}
\frac{p+2a+2+2\{d+c_3c_4/(1+c_3)\}}{3p/2+a}\leq b<1.
\end{equation}
\end{enumerate}
\end{corollary}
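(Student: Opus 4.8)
The plan is to deduce this corollary directly from Corollary~\ref{cor.minimax}, after (i) verifying that $h(\kappa)=(1+c_3\kappa)^{c_4}\exp(d\kappa)$ with $c_3>0$ is an admissible choice in the prior \eqref{our.prior}, and (ii) using Lemma~\ref{lem:c3c4d} to decide which of the two monotonicity branches of Corollary~\ref{cor.minimax} to invoke.

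First I would check that \ref{A.1} and \ref{A.2} are met for this $h$. It is $C^\infty$ and strictly positive on $[0,1]$ with $h(0)=1$, hence slowly varying at $\kappa=0$; moreover $H(\kappa)=\kappa h'(\kappa)/h(\kappa)=d\kappa+c_3c_4\kappa/(1+c_3\kappa)$, as already recorded before Lemma~\ref{lem:c3c4d}, tends to $0$ as $\kappa\to0$, which is \eqref{hh}; with the standing assumption $a<1$ this gives \ref{A.1}. Since $h(1)=(1+c_3)^{c_4}e^{d}<\infty$, the finiteness part of \ref{A.2} holds as well (the remaining $0<b<1$ is a standing assumption, and it is consistent with the asserted conclusion because the lower endpoints $\tfrac{p+2a+2}{3p/2+a}$ and $\tfrac{p+2a+2+2\{d+c_3c_4/(1+c_3)\}}{3p/2+a}$ are positive, as $a\geq-p/2$, and strictly less than $1$, as $a<p/2-2$, so the stated $b$-interval is a nonempty subinterval of $(0,1)$). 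By Lemma~\ref{lem.prop.prior} the marginal $m(y)$ is finite for every $y$ when $a>-p/2$, so the (generalized) Bayes estimator \eqref{hat.beta} is well defined and Theorem~\ref{thm.minimax} and Corollary~\ref{cor.minimax} apply.

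Next I would push the sign conditions on $d$ and $c_4$ through Lemma~\ref{lem:c3c4d}. In case~(\ref{2.cor.minimax.example.1}) the first two displayed conditions are exactly the hypotheses of Lemma~\ref{lem:c3c4d}(1), so $H$ is monotone non-increasing; then \eqref{cor.1} gives $H_1=H$, $H_2\equiv0$, $\max_{\kappa\in[0,1]}H_2(\kappa)=0$ and $H_1(1)=H(1)=d+c_3c_4/(1+c_3)$, while the third displayed condition is precisely the hypothesis on $H(1)$ demanded in Corollary~\ref{cor.minimax}(\ref{cor.minimax.1}); that part then gives minimaxity for $\tfrac{p+2a+2}{3p/2+a}\leq b<1$. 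In case~(\ref{2.cor.minimax.example.2}) the first two displayed conditions are the hypotheses of Lemma~\ref{lem:c3c4d}(2), so $H$ is monotone non-decreasing; then \eqref{cor.2} gives $H_1\equiv0$, $H_2=H$ and $\max_{\kappa\in[0,1]}H_2(\kappa)=H(1)=d+c_3c_4/(1+c_3)$, the third displayed condition is the hypothesis $H(1)\leq(p/2-a-2)/2$ of Corollary~\ref{cor.minimax}(\ref{cor.minimax.2}), and that part gives minimaxity for $\tfrac{p+2a+2+2H(1)}{3p/2+a}\leq b<1$, which is the stated interval once $H(1)=d+c_3c_4/(1+c_3)$ is substituted.

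I do not expect a substantive obstacle, since the proof is bookkeeping on top of two results already in hand. The only step that needs attention is matching the abstract quantities $H(1)$ and $\max_{\kappa}H_2(\kappa)$ of Corollary~\ref{cor.minimax} to the explicit expression $d+c_3c_4/(1+c_3)$ and keeping the signs straight; the boundary cases (equality in the monotonicity thresholds of Lemma~\ref{lem:c3c4d}) present no difficulty, and if one preferred to dispatch them uniformly one would simply invoke the inequality \eqref{thm.minimax.eq.1} of Theorem~\ref{thm.minimax} in place of the packaged Corollary~\ref{cor.minimax}.
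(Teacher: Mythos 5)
Your argument is the paper's own: the text derives this corollary in a single sentence from Lemma \ref{lem:c3c4d} and Corollary \ref{cor.minimax} with $H(1)=d+c_3c_4/(1+c_3)$, and your write-up is exactly that derivation with the verification of \ref{A.1}, \ref{A.2} and the non-emptiness of the $b$-interval made explicit. Part~\ref{2.cor.minimax.example.2} is handled correctly: the first two cases are the hypotheses of Lemma \ref{lem:c3c4d}(2), \eqref{cor.2} identifies $\max_{\kappa}H_2(\kappa)=H(1)$, and the third case is the hypothesis of Corollary \ref{cor.minimax}(\ref{cor.minimax.2}).

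One claim in part~\ref{2.cor.minimax.example.1} is not correct as written. You assert that the third condition, $d+c_3c_4/(1+c_3)\geq -(p/2-a-2)$, is ``precisely the hypothesis on $H(1)$ demanded'' by Corollary \ref{cor.minimax}(\ref{cor.minimax.1}); but that hypothesis is $H(1)\geq -(p/2+a+2)$, and $-(p/2-a-2)=-p/2+a+2$ is not the same number as $-(p/2+a+2)=-p/2-a-2$. The stated condition implies the required one exactly when $a\geq -2$; for $-p/2\leq a<-2$ it admits values of $H(1)$ in $[-p/2+a+2,\,-p/2-a-2)$ for which Corollary \ref{cor.minimax}(\ref{cor.minimax.1}) does not apply, and feeding such an $H(1)$ into \eqref{thm.minimax.eq.1} yields the stronger requirement $b\geq (p+2a+2)/(2p+2a+2+H(1))$ rather than the stated $b\geq(p+2a+2)/(3p/2+a)$. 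This is almost certainly a sign typo in the corollary's statement --- the parallel Corollary \ref{cor.minimax.example}(\ref{cor.minimax.example.1}) uses the threshold $H(1)=-c_1c_2\geq-(p/2+a+2)$ --- but your deduction should either note that it proves the result under the corrected threshold $H(1)\geq -(p/2+a+2)$ or restrict to $a\geq-2$, rather than claim the two conditions coincide. A smaller point: at the admitted endpoint $a=-p/2$ the finiteness criterion of Lemma \ref{lem.prop.prior} fails for this $h$ (since $h(0)=1$ forces $\int_0^1\kappa^{-1}h(\kappa)\,\rd\kappa=\infty$), so the estimator is well defined only for $a>-p/2$, as you implicitly assume.
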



\section{Simulation}
\label{sec:simulation}
\subsection{Algorithm for posterior sampling}
We develop an algorithm for sampling from the posterior $\pi(\beta,\kappa \mid y)$ corresponding to our prior $\pi(\beta,\kappa)$ in \eqref{hierarchical.2} and \eqref{our.prior}.
When $h(\kappa) \equiv 1$ (e.g. half-Cauchy prior), the Gibbs sampler can be written down in closed-form by using the fact that the beta-prime (or Pearson Type VI) distribution is given by an inverse-Gamma mixture of inverse-Gamma \citep{Polson-Scott-2012}.
However, it seems difficult to derive such a data-augmentation technique for general $h(\kappa)$.
Thus, we employ the Metropolis-within-Gibbs algorithm \citep{Robert-Casella-2004} here.

First, the conditional distribution $\pi (\beta \mid \kappa,y)$ is Gaussian with mean $(1-\kappa)y$ and covariance $(1-\kappa) I_p$ and thus easy to sample.
Next, the conditional distribution $\pi (\kappa \mid \beta,y)$ is 
\[
\pi (\kappa \mid \beta,y) \propto \kappa^{a+p/2-1} (1-\kappa)^{b-1} \exp \left( -\frac{\| y \|^2}{2} \kappa \right) h(\kappa), \quad 0 \leq \kappa \leq 1.
\]
To sample from it, we use the independent Metropolis--Hastings algorithm \citep{Robert-Casella-2004} with the proposal distribution given by ${\rm Beta}(\tilde{a},\tilde{b})$, where we set $\tilde{a}=\max(a,0.5)$ and $\tilde{b}=0.5$.
In summary, our algorithm is given as follows:

\begin{enumerate}
	\item Set $\kappa_0=0.5$ and $t=1$
	
	\item Sample $\beta_t \sim {\rm N}_p ((1-\kappa_{t-1})y,(1-\kappa_{t-1})I_p)$
	
	\item Sample $\tilde{\kappa} \sim {\rm Beta}(\tilde{a},\tilde{b})$ and set
	\[
	\kappa_t = \begin{cases} \tilde{\kappa} & {\rm with\ probability} \ q \\ \kappa_{t-1} & {\rm otherwise} \end{cases}
	\]
	where
	\[
	q = \min \left( \left( \frac{\tilde{\kappa}}{\kappa} \right)^{a-\tilde{a}+p/2} \left( \frac{1-\tilde{\kappa}}{1-\kappa} \right)^{b-\tilde{b}} \exp \left( -\frac{\| y \|^2}{2} (\tilde{\kappa}-\kappa) \right) \frac{h(\tilde{\kappa})}{h(\kappa)}, 1 \right)
	\]
	
	\item Update $t$ to $t+1$ and return to 2
\end{enumerate}

In computing the generalized Bayes estimator (posterior mean of $\beta$), we applied the Rao--Blackwellization technique for variance reduction \citep{Robert-Roberts-2021}.
Namely, we replaced step 2 of the above algorithm with $\beta_t = (1-\kappa_{t-1}) y$ and took average of sampled $\beta$ after burn-in.

In the following, we examine two priors.
The first one is 
\begin{align}
	\pi(\kappa) = \kappa^{a_*-1} (1-\kappa)^{a_*-1} \label{prior1}
\end{align}
where $a_* \in (0,1]$ is given in \eqref{a_star}.
From Corollary \ref{cor.minimax}, 
the Bayes estimator with respect to this prior is minimax when $p \geq 7$.
The second one is 
\begin{align}
	\pi(\kappa) = \kappa^{a-1} (1-\kappa)^{b-1} (1+c_1 \log(1/\kappa))^{c_2}, \label{prior2}
\end{align}
where $a=0, b=(5p+4)/(6p), c_1=(p-4)/16, c_2=-2$ as in \eqref{choice.ex1}.
From the discussion in Section~\ref{sec.ex1},  this prior is proper and the Bayes estimator with respect to it is minimax when $p \geq 5$.

\subsection{Risk comparison}
Figure~\ref{fig_mse} plots the quadratic risk of the Bayes estimators with respect to the priors \eqref{prior1} and \eqref{prior2} for $p=10$.
The risk of the James--Stein estimator is also plotted for comparison.
For all estimators, the risk is increasing with $\| \beta \|$ and converges to $p=10$ as $\| \beta \| \to \infty$, which indicates their minimaxity.
The  two Bayes estimators attain smaller risk than the James--Stein estimator when $\| \beta \| \leq 4$, which imply their advantage in sparse settings.

\begin{figure}[h]
	\centering
	\begin{tikzpicture}
		\begin{axis}[
			title={},
			xlabel={$\| \beta \|$}, xmin=0, xmax=10,
			ylabel={quadratic risk}, ymin=0, ymax=11,
			width=0.7\linewidth
			]
			\addplot[thick, color=black,
			filter discard warning=false, unbounded coords=discard
			] table {
         0    0.8718
0.1000    0.8327
0.2000    0.9324
0.3000    0.9748
0.4000    0.9921
0.5000    1.0307
0.6000    1.1439
0.7000    1.2320
0.8000    1.3093
0.9000    1.3791
1.0000    1.5537
1.1000    1.6267
1.2000    1.7856
1.3000    1.9502
1.4000    2.0757
1.5000    2.2687
1.6000    2.4102
1.7000    2.6207
1.8000    2.8640
1.9000    3.0259
2.0000    3.2268
2.1000    3.4668
2.2000    3.6218
2.3000    3.8889
2.4000    4.0025
2.5000    4.3083
2.6000    4.5709
2.7000    4.6818
2.8000    4.9727
2.9000    5.2375
3.0000    5.3401
3.1000    5.5527
3.2000    5.7104
3.3000    5.9061
3.4000    6.2748
3.5000    6.3376
3.6000    6.4874
3.7000    6.6802
3.8000    6.8269
3.9000    7.0442
4.0000    7.1286
4.1000    7.2511
4.2000    7.4561
4.3000    7.5987
4.4000    7.7927
4.5000    7.7447
4.6000    7.9832
4.7000    8.1125
4.8000    8.1027
4.9000    7.9768
5.0000    8.0274
5.1000    8.3023
5.2000    8.5281
5.3000    8.5222
5.4000    8.5532
5.5000    8.6362
5.6000    8.7351
5.7000    8.7155
5.8000    8.9813
5.9000    8.7020
6.0000    8.8050
6.1000    8.6825
6.2000    8.9560
6.3000    8.8857
6.4000    8.8389
6.5000    9.0649
6.6000    8.9163
6.7000    9.1659
6.8000    8.9161
6.9000    9.1327
7.0000    9.2679
7.1000    9.0214
7.2000    9.1478
7.3000    9.0496
7.4000    9.2300
7.5000    9.3483
7.6000    9.0861
7.7000    9.3051
7.8000    9.1738
7.9000    9.5554
8.0000    9.2758
8.1000    9.4274
8.2000    9.5099
8.3000    9.1300
8.4000    9.6334
8.5000    9.3528
8.6000    9.2772
8.7000    9.5828
8.8000    9.5467
8.9000    9.4017
9.0000    9.4164
9.1000    9.2993
9.2000    9.3212
9.3000    9.2318
9.4000    9.1696
9.5000    9.3743
9.6000    9.2944
9.7000    9.2321
9.8000    9.5387
9.9000    9.3970
10.0000    9.5380
			};
			\addplot[dashed, thick, color=black,
			filter discard warning=false, unbounded coords=discard
			] table {
				0    0.9992
0.1000    0.9999
0.2000    0.9987
0.3000    1.0505
0.4000    1.0767
0.5000    1.1358
0.6000    1.2228
0.7000    1.2900
0.8000    1.3932
0.9000    1.5063
1.0000    1.6176
1.1000    1.7367
1.2000    1.8935
1.3000    2.0357
1.4000    2.1933
1.5000    2.3567
1.6000    2.5228
1.7000    2.7018
1.8000    2.8825
1.9000    3.0858
2.0000    3.2564
2.1000    3.4856
2.2000    3.7129
2.3000    3.9219
2.4000    4.1043
2.5000    4.3348
2.6000    4.5263
2.7000    4.7607
2.8000    4.9489
2.9000    5.1376
3.0000    5.3501
3.1000    5.5159
3.2000    5.7354
3.3000    5.9133
3.4000    6.1076
3.5000    6.3101
3.6000    6.4452
3.7000    6.5827
3.8000    6.8142
3.9000    6.9238
4.0000    7.0402
4.1000    7.2470
4.2000    7.3432
4.3000    7.5729
4.4000    7.6150
4.5000    7.7205
4.6000    7.8069
4.7000    7.9545
4.8000    7.9800
4.9000    7.9874
5.0000    8.1427
5.1000    8.2483
5.2000    8.3395
5.3000    8.3936
5.4000    8.4137
5.5000    8.4514
5.6000    8.5583
5.7000    8.6252
5.8000    8.6216
5.9000    8.7200
6.0000    8.6897
6.1000    8.7281
6.2000    8.7622
6.3000    8.8743
6.4000    8.9087
6.5000    8.9216
6.6000    8.9167
6.7000    8.9424
6.8000    8.9458
6.9000    8.9878
7.0000    9.0561
7.1000    9.0110
7.2000    9.1328
7.3000    9.0250
7.4000    9.1211
7.5000    9.1262
7.6000    9.1035
7.7000    9.1865
7.8000    9.1684
7.9000    9.2336
8.0000    9.2271
8.1000    9.3278
8.2000    9.3694
8.3000    9.2771
8.4000    9.3049
8.5000    9.2550
8.6000    9.3187
8.7000    9.3647
8.8000    9.3417
8.9000    9.3826
9.0000    9.3448
9.1000    9.4414
9.2000    9.3565
9.3000    9.2981
9.4000    9.5018
9.5000    9.3667
9.6000    9.4243
9.7000    9.4495
9.8000    9.4770
9.9000    9.4588
10.0000    9.5176
			};
			\addplot[dotted, thick, color=black,
			filter discard warning=false, unbounded coords=discard
			] table {
				0    2.0643
0.1000    2.0238
0.2000    2.0129
0.3000    2.0943
0.4000    2.0854
0.5000    2.1452
0.6000    2.3016
0.7000    2.3728
0.8000    2.4821
0.9000    2.6238
1.0000    2.6979
1.1000    2.8935
1.2000    3.0315
1.3000    3.1940
1.4000    3.3660
1.5000    3.5265
1.6000    3.6476
1.7000    3.8338
1.8000    4.0043
1.9000    4.2403
2.0000    4.3383
2.1000    4.5358
2.2000    4.7339
2.3000    4.9289
2.4000    5.0706
2.5000    5.2278
2.6000    5.3715
2.7000    5.5797
2.8000    5.7109
2.9000    5.8095
3.0000    5.9909
3.1000    6.0787
3.2000    6.2575
3.3000    6.3300
3.4000    6.5072
3.5000    6.6471
3.6000    6.7043
3.7000    6.8000
3.8000    6.9938
3.9000    7.0513
4.0000    7.0939
4.1000    7.2885
4.2000    7.3353
4.3000    7.5518
4.4000    7.5419
4.5000    7.6209
4.6000    7.6679
4.7000    7.8093
4.8000    7.7978
4.9000    7.8156
5.0000    7.9201
5.1000    8.0450
5.2000    8.1056
5.3000    8.1752
5.4000    8.1803
5.5000    8.2147
5.6000    8.3200
5.7000    8.3763
5.8000    8.3839
5.9000    8.4872
6.0000    8.4460
6.1000    8.4965
6.2000    8.5356
6.3000    8.6412
6.4000    8.6904
6.5000    8.7179
6.6000    8.7185
6.7000    8.7568
6.8000    8.7574
6.9000    8.8053
7.0000    8.8821
7.1000    8.8443
7.2000    8.9490
7.3000    8.8518
7.4000    8.9582
7.5000    8.9692
7.6000    8.9547
7.7000    9.0281
7.8000    9.0304
7.9000    9.0910
8.0000    9.0899
8.1000    9.1976
8.2000    9.2333
8.3000    9.1587
8.4000    9.1732
8.5000    9.1411
8.6000    9.2097
8.7000    9.2475
8.8000    9.2263
8.9000    9.2741
9.0000    9.2249
9.1000    9.3364
9.2000    9.2478
9.3000    9.2085
9.4000    9.4006
9.5000    9.2739
9.6000    9.3336
9.7000    9.3586
9.8000    9.3952
9.9000    9.3809
10.0000    9.4267
			};
		\end{axis}
	\end{tikzpicture} 
	\caption{Comparison of quadratic risk for $p=10$. solid: Bayes estimator with respect to the prior \eqref{prior1}, dashed: Bayes estimator with respect to the prior \eqref{prior2}, dotted: James--Stein estimator.}
	\label{fig_mse}
\end{figure}
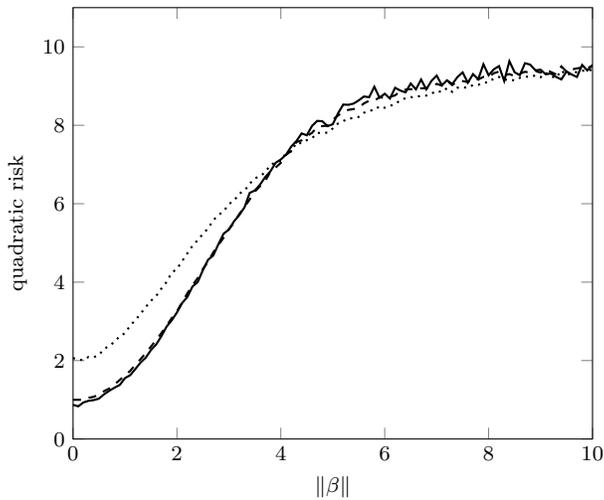

\subsection{Shrinkage factor}\label{sec:shr_factor}
Figure~\ref{fig_shr} plots the shrinkage factors $\phi(\| y \|^2)$ in \eqref{phi} of Bayes estimators with respect to the priors \eqref{prior1} and \eqref{prior2} for $p=10$.
From Lemma~\ref{lem.non.monotonicity}, $\phi(\| y \|^2)$ for the prior \eqref{prior1} is non-monotone and converges to $p + 2 a_* \approx 11.7309$ from the above.
Similarly, $\phi(\| y \|^2)$ for the prior \eqref{prior2} is non-monotone and converges to $p = 10$ from the above.
The figure is compatible with these results and indicates that our sampling algorithm converged properly.

\begin{figure}[h]
	\centering
	\begin{tikzpicture}
		\begin{axis}[
			title={},
			xlabel={$\| y \|$}, xmin=0, xmax=10,
			ylabel={$\phi (\| y\|^2)$}, ymin=0, ymax=14,
			width=0.7\linewidth
			]
			\addplot[thick, color=black,
			filter discard warning=false, unbounded coords=discard
			] table {
0.1000    0.0087
0.2000    0.0348
0.3000    0.0784
0.4000    0.1393
0.5000    0.2175
0.6000    0.3126
0.7000    0.4254
0.8000    0.5546
0.9000    0.7009
1.0000    0.8636
1.1000    1.0430
1.2000    1.2385
1.3000    1.4505
1.4000    1.6779
1.5000    1.9209
1.6000    2.1784
1.7000    2.4488
1.8000    2.7348
1.9000    3.0353
2.0000    3.3498
2.1000    3.6745
2.2000    4.0082
2.3000    4.3567
2.4000    4.7109
2.5000    5.0728
2.6000    5.4464
2.7000    5.8209
2.8000    6.2003
2.9000    6.5828
3.0000    6.9698
3.1000    7.3506
3.2000    7.7321
3.3000    8.1138
3.4000    8.4777
3.5000    8.8341
3.6000    9.1813
3.7000    9.5084
3.8000    9.8134
3.9000   10.1147
4.0000   10.3736
4.1000   10.6303
4.2000   10.8474
4.3000   11.0556
4.4000   11.2224
4.5000   11.3717
4.6000   11.4959
4.7000   11.6090
4.8000   11.6889
4.9000   11.7563
5.0000   11.8073
5.1000   11.8459
5.2000   11.8548
5.3000   11.8940
5.4000   11.8889
5.5000   11.9047
5.6000   11.9070
5.7000   11.8943
5.8000   11.9078
5.9000   11.9038
6.0000   11.8802
6.1000   11.8683
6.2000   11.8613
6.3000   11.8567
6.4000   11.8475
6.5000   11.8526
6.6000   11.8263
6.7000   11.8463
6.8000   11.8394
6.9000   11.8398
7.0000   11.8239
7.1000   11.7964
7.2000   11.8185
7.3000   11.8036
7.4000   11.8260
7.5000   11.7958
7.6000   11.8136
7.7000   11.8063
7.8000   11.7921
7.9000   11.7786
8.0000   11.8051
8.1000   11.8018
8.2000   11.7800
8.3000   11.7915
8.4000   11.7885
8.5000   11.7953
8.6000   11.8125
8.7000   11.7891
8.8000   11.7719
8.9000   11.7789
9.0000   11.7510
9.1000   11.7615
9.2000   11.7749
9.3000   11.7773
9.4000   11.8029
9.5000   11.7548
9.6000   11.7686
9.7000   11.7713
9.8000   11.7991
9.9000   11.7939
10.0000   11.7521
			};
			\addplot[dashed, thick, color=black,
			filter discard warning=false, unbounded coords=discard
			] table {
				0.1000    0.0086
0.2000    0.0345
0.3000    0.0775
0.4000    0.1379
0.5000    0.2154
0.6000    0.3094
0.7000    0.4206
0.8000    0.5491
0.9000    0.6945
1.0000    0.8549
1.1000    1.0322
1.2000    1.2251
1.3000    1.4349
1.4000    1.6589
1.5000    1.8979
1.6000    2.1523
1.7000    2.4199
1.8000    2.7026
1.9000    2.9959
2.0000    3.3029
2.1000    3.6196
2.2000    3.9507
2.3000    4.2871
2.4000    4.6294
2.5000    4.9935
2.6000    5.3489
2.7000    5.7176
2.8000    6.0848
2.9000    6.4504
3.0000    6.8261
3.1000    7.1952
3.2000    7.5506
3.3000    7.9022
3.4000    8.2483
3.5000    8.5849
3.6000    8.8893
3.7000    9.2038
3.8000    9.4950
3.9000    9.7518
4.0000    9.9844
4.1000   10.2177
4.2000   10.4133
4.3000   10.5714
4.4000   10.7105
4.5000   10.8434
4.6000   10.9505
4.7000   11.0375
4.8000   11.0785
4.9000   11.1346
5.0000   11.1613
5.1000   11.1746
5.2000   11.1866
5.3000   11.1974
5.4000   11.2008
5.5000   11.2005
5.6000   11.1659
5.7000   11.1597
5.8000   11.1635
5.9000   11.1388
6.0000   11.1324
6.1000   11.1199
6.2000   11.1044
6.3000   11.0830
6.4000   11.0881
6.5000   11.0837
6.6000   11.0631
6.7000   11.0607
6.8000   11.0441
6.9000   11.0092
7.0000   11.0215
7.1000   11.0034
7.2000   10.9861
7.3000   10.9892
7.4000   11.0005
7.5000   10.9777
7.6000   10.9733
7.7000   10.9456
7.8000   10.9571
7.9000   10.9440
8.0000   10.9501
8.1000   10.9250
8.2000   10.9260
8.3000   10.9282
8.4000   10.9069
8.5000   10.9058
8.6000   10.8741
8.7000   10.8794
8.8000   10.9019
8.9000   10.8787
9.0000   10.8973
9.1000   10.8699
9.2000   10.8720
9.3000   10.8551
9.4000   10.8744
9.5000   10.8572
9.6000   10.8622
9.7000   10.8620
9.8000   10.8596
9.9000   10.8266
10.0000   10.8182
			};
	\addplot[dotted, thick, color=black,
	filter discard warning=false, unbounded coords=discard
	] table {
		0 8
		10 8
	};
		\end{axis}
	\end{tikzpicture} 
	\caption{Comparison of shrinkage factors for $p=10$. solid: Bayes estimator with respect to prior \eqref{prior1}, dashed: Bayes estimator with respect to prior \eqref{prior2}. dotted: James--Stein estimator.}
	\label{fig_shr}
\end{figure}
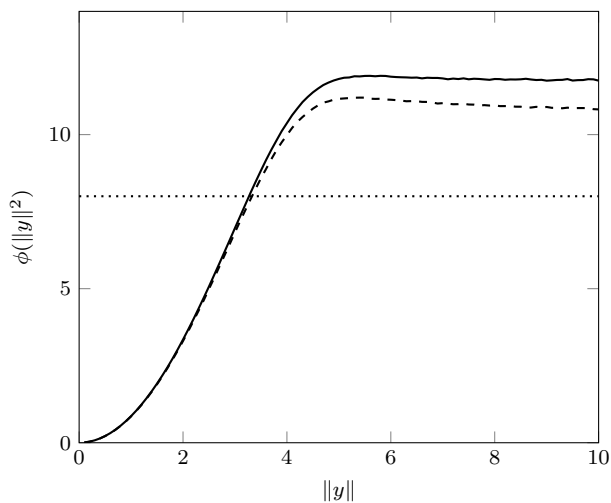

\subsection{Prior densities}
Figure~\ref{fig_prior} plots the log-prior densities of $\kappa$.
The prior \eqref{prior1} shows a symmetric U-shape with the same divergence speed at $\kappa=0$ and $\kappa=1$.
On the other hand, the prior \eqref{prior2} has faster divergence at $\kappa=0$ than at $\kappa=1$.
Note that \cite{carvalho2010horseshoe} used a similar plot to compare several types of shrinkage priors.

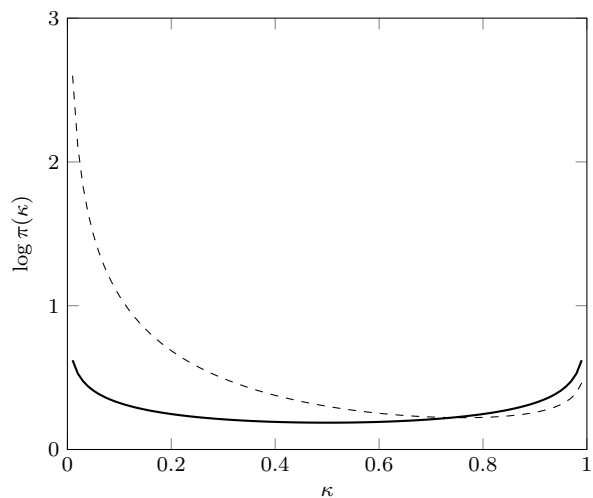
\begin{figure}
	\centering
	\begin{tikzpicture}
		\begin{axis}[
			title={},
			xlabel={$\kappa$}, xmin=0, xmax=1,
			ylabel={$\log \pi(\kappa)$}, ymin=0, ymax=3,
			width=0.7\linewidth
			]
			\addplot[thick, color=black,
filter discard warning=false, unbounded coords=discard
] table {

0.0100    0.6209
0.0200    0.5290
0.0300    0.4759
0.0400    0.4386
0.0500    0.4099
0.0600    0.3868
0.0700    0.3675
0.0800    0.3510
0.0900    0.3367
0.1000    0.3240
0.1100    0.3126
0.1200    0.3025
0.1300    0.2932
0.1400    0.2848
0.1500    0.2771
0.1600    0.2700
0.1700    0.2635
0.1800    0.2574
0.1900    0.2518
0.2000    0.2466
0.2100    0.2417
0.2200    0.2371
0.2300    0.2329
0.2400    0.2289
0.2500    0.2252
0.2600    0.2217
0.2700    0.2185
0.2800    0.2155
0.2900    0.2126
0.3000    0.2100
0.3100    0.2075
0.3200    0.2052
0.3300    0.2030
0.3400    0.2010
0.3500    0.1992
0.3600    0.1975
0.3700    0.1959
0.3800    0.1945
0.3900    0.1932
0.4000    0.1920
0.4100    0.1909
0.4200    0.1900
0.4300    0.1892
0.4400    0.1885
0.4500    0.1879
0.4600    0.1874
0.4700    0.1870
0.4800    0.1867
0.4900    0.1866
0.5000    0.1865
0.5100    0.1866
0.5200    0.1867
0.5300    0.1870
0.5400    0.1874
0.5500    0.1879
0.5600    0.1885
0.5700    0.1892
0.5800    0.1900
0.5900    0.1909
0.6000    0.1920
0.6100    0.1932
0.6200    0.1945
0.6300    0.1959
0.6400    0.1975
0.6500    0.1992
0.6600    0.2010
0.6700    0.2030
0.6800    0.2052
0.6900    0.2075
0.7000    0.2100
0.7100    0.2126
0.7200    0.2155
0.7300    0.2185
0.7400    0.2217
0.7500    0.2252
0.7600    0.2289
0.7700    0.2329
0.7800    0.2371
0.7900    0.2417
0.8000    0.2466
0.8100    0.2518
0.8200    0.2574
0.8300    0.2635
0.8400    0.2700
0.8500    0.2771
0.8600    0.2848
0.8700    0.2932
0.8800    0.3025
0.8900    0.3126
0.9000    0.3240
0.9100    0.3367
0.9200    0.3510
0.9300    0.3675
0.9400    0.3868
0.9500    0.4099
0.9600    0.4386
0.9700    0.4759
0.9800    0.5290
0.9900    0.6209
};
			\addplot[dashed, color=black,
			filter discard warning=false, unbounded coords=discard
			] table {
	0.0100    2.5998
0.0200    2.1080
0.0300    1.8308
0.0400    1.6396
0.0500    1.4948
0.0600    1.3790
0.0700    1.2830
0.0800    1.2013
0.0900    1.1305
0.1000    1.0682
0.1100    1.0128
0.1200    0.9629
0.1300    0.9178
0.1400    0.8765
0.1500    0.8387
0.1600    0.8039
0.1700    0.7716
0.1800    0.7415
0.1900    0.7135
0.2000    0.6873
0.2100    0.6627
0.2200    0.6396
0.2300    0.6178
0.2400    0.5973
0.2500    0.5778
0.2600    0.5594
0.2700    0.5419
0.2800    0.5253
0.2900    0.5095
0.3000    0.4945
0.3100    0.4801
0.3200    0.4664
0.3300    0.4534
0.3400    0.4409
0.3500    0.4290
0.3600    0.4176
0.3700    0.4067
0.3800    0.3962
0.3900    0.3862
0.4000    0.3767
0.4100    0.3675
0.4200    0.3587
0.4300    0.3502
0.4400    0.3422
0.4500    0.3344
0.4600    0.3270
0.4700    0.3199
0.4800    0.3131
0.4900    0.3066
0.5000    0.3003
0.5100    0.2944
0.5200    0.2887
0.5300    0.2833
0.5400    0.2781
0.5500    0.2731
0.5600    0.2684
0.5700    0.2640
0.5800    0.2597
0.5900    0.2557
0.6000    0.2519
0.6100    0.2484
0.6200    0.2450
0.6300    0.2419
0.6400    0.2390
0.6500    0.2363
0.6600    0.2338
0.6700    0.2315
0.6800    0.2295
0.6900    0.2276
0.7000    0.2260
0.7100    0.2246
0.7200    0.2235
0.7300    0.2225
0.7400    0.2218
0.7500    0.2214
0.7600    0.2212
0.7700    0.2213
0.7800    0.2217
0.7900    0.2224
0.8000    0.2234
0.8100    0.2247
0.8200    0.2264
0.8300    0.2284
0.8400    0.2309
0.8500    0.2339
0.8600    0.2374
0.8700    0.2415
0.8800    0.2462
0.8900    0.2517
0.9000    0.2581
0.9100    0.2656
0.9200    0.2744
0.9300    0.2848
0.9400    0.2973
0.9500    0.3128
0.9600    0.3323
0.9700    0.3584
0.9800    0.3963
0.9900    0.4630
			};
		\end{axis}
	\end{tikzpicture}
	\caption{Comparison of log-prior densities $\log \pi(\kappa)$ for $p=10$. solid: prior \eqref{prior1}, dashed: prior \eqref{prior2}}
	\label{fig_prior}
\end{figure}

\end{document}